\documentclass[12pt,a4paper,reqno]{amsart}
\usepackage[latin1]{inputenc}
\usepackage[english]{babel}
\usepackage{amsmath}
\usepackage{amsfonts}
\usepackage{amssymb}
\usepackage{mathrsfs}
\usepackage{latexsym}

\usepackage{mathrsfs}

\usepackage[margin=3cm]{geometry}

\usepackage{color}

\usepackage{graphicx}

\usepackage[plainpages=false,colorlinks,hyperindex,bookmarksopen,linkcolor=red,citecolor=blue,urlcolor=blue]{hyperref}

%\bibpunct{[}{]}{;}{n}{,}{,}

\newtheorem{theorem}{Theorem}[section]
\newtheorem{definition}{Definition}

\newtheorem{prop}{Proposition}[section]
\newtheorem{remark}{Remark}[section]

\numberwithin{equation}{section}

\allowdisplaybreaks

\author{Mirko D'Ovidio}
        \address{Department of Basic and Applied Sciences for Engineering\newline
Sapienza University of Rome\newline via A. Scarpa 10, Rome, Italy}
        \email[Corresponding author]{mirko.dovidio@uniroma1.it}

\title[Probability and non-local Boundary Value Problems]{On the non-local boundary value problem from the probabilistic viewpoint}
\begin{document}

\maketitle

\begin{abstract}
We provide a short introduction of new and well-known facts relating non-local operators and irregular domains. Cauchy problems and boundary value problems are considered in case non-local operators are involved. Such problems respectively lead to anomalous behavior on the bulk and on the surface of a given domain. Such a behavior can be considered (in a macroscopic viewpoint) in order to describe regular motion on irregular domains (in the microscopic viewpoint).  
\end{abstract}

\tableofcontents

\section{Introduction}

We present a collection of results about the connection between non-local operators and irregular domains. We are concerned with the interplay between macroscopic and microscopic analysis of Feller motions on bounded and unbounded domains. Our discussion relies on  the fact that anomalous motions on regular domains can be considered in order to describe motions on irregular domains. Here, by irregular domains we mean domains with irregular boundaries or interfaces, for example of fractal type.

Fractional Cauchy problems have been considered as models for motions on non-homogeneous domains, fractals for instance. In the same spirit we consider anomalous motions for non-local boundary value problems, that is the motions exhibit anomalous behavior only near the boundary. Non-local boundary conditions therefore introduce models for motions on irregular domains, for example domains with trap boundaries (trap domains, in the sequel). 

The presentation will run as follows. First we recall some facts about non-local operators and the associated processes. Sometimes non-local operators are also referred as to generalized fractional operators. However, there is no fractional order to be considered. We move to the next sections by discussing the non-local initial value problems and then the non-local boundary value problems. For the latter, we underline some connection with the case of random Koch domains and, in particular, with the boundary behaviors introduced by the non-local boundary conditions.

\section{Markov processes, time changes, non-local operators}

\subsection{Markov Processes}
Let $X=\{X_t\}_{t\geq 0}$ with $X_0=x \in \Omega$ be the Markov process on $\Omega$ with generator $(G,D(G))$ where $(G,D(G))$ is the generator of the semigroup $Q_t$, then we have the probabilistic representation
\begin{align*}
Q_t f(x) = \mathbf{E}_x[f(X_t)].
\end{align*}
We recall that for the Markovian semigroup $Q_t$, the generator is defined by 
\begin{align}
\label{limitGenerator}
G f(x) = \lim_{t\downarrow 0} \frac{Q_t f(x) - f(x) }{t}, \quad f \in D(G)
\end{align}
for $f$ in some set of real-valued functions and for which the limit exists in some sense. Then we say that $f$ belongs to the domain of the generator $D(G)$.  
In case $X$ is a Feller process for instance, the domain $D(G)$ consists of continuous functions vanishing at infinity for which \eqref{limitGenerator} exists as uniform limit. Examples of Feller processes are given by the Brownian motion (with $G f= f^{\prime \prime}$), the Poisson process (with $Gf = \lambda (F - 1) f$ where $Ff(x)=f(x+1)$ is the forward operator) and in general, L\'{e}vy processes (for which we will consider $G = -\Psi(-\Delta)$ to be better defined further on). For the Brownian motion the forward and backward Kolmogorov's equation have the same reading in terms of heat equations. A Poisson process (started at zero for instance) is driven by $\lambda (1-B)f$ where $Bf(x)=f(x-1)$ is the backward operator. We skip here a discussion on L\'{e}vy processes.

We say that $X$ is a Feller process if $X$ is a strong Markov process, right-continuous with no discontinuity other than jumps. If $X$ is a continuous strong Markov process we say that $X$ is a Feller diffusion (or simply, diffusion).

For the process $X$ started at $x$ with density $p(t,x,y)$ it holds that
\begin{align*}
\mathbf{P}_x(X_t \in D) = \int_D p(t,x,y)dy, \quad D \subseteq \Omega
\end{align*}
and 
\begin{align*}
\mathbf{E}_x[f(X_t)] = \int_{\Omega} f(y) p(t,x,y)dy, \quad t>0, \; x \in \Omega
\end{align*} 
where $\mathbf{E}_x$ is the expectation with respect to $\mathbf{P}_x$. The function $u(t,x)=Q_t f(x)$ solves the Cauchy problem
\begin{align*}
\frac{\partial u}{\partial t} = G u, \quad u_0 = f \in D(G).
\end{align*}

\subsection{Subordinators and Inverses}
The Bernstein function 
\begin{equation}
\Phi(\lambda) = \int_0^\infty \left( 1 - e^{ - \lambda z} \right) \phi(dz), \quad \lambda \geq 0 \label{LevKinFormula}
\end{equation} 
where $\phi$ on $(0, \infty)$ with $\int_0^\infty (1 \wedge z) \phi(dz) < \infty$ is now a L\'evy measure can be associated with a L\'evy process. Thus, the symbol $\Phi$ is the Laplace exponent of a subordinator $H_t$ with $H_0=0$, that is 
$$\mathbf{E}_0[\exp( - \lambda H_t)] = \exp(- t \Phi(\lambda)).$$ 
We also recall that
\begin{align}
\label{tailSymb}
\frac{\Phi(\lambda)}{\lambda} = \int_0^\infty e^{-\lambda z} \overline{\phi}(z)dz, \qquad \overline{\phi}(z) = \phi((z, \infty))
\end{align}
and $\overline{\phi}$ is the so called \emph{tail of the L\'evy measure}.
 
Let us introduce the inverse process 
$$L_t := \inf \{s \geq 0\,:\, H_s \geq t \} = \inf \{s \geq 0\,:\, H_s \notin (0,t) \}$$
with $L_0=0$. We do not consider (except in some well mentioned case) step-processes with $\phi((0, \infty)) < \infty$ and therefore we focus only on strictly increasing subordinators with infinite measures. Thus, the inverse process $L$ turns out to be a continuous process. Both random times $H$ and $L$ are not decreasing. By definition, we also can write
\begin{align}
\label{relationHL}
\mathbf{P}_0(H_t < s) = \mathbf{P}_0(L_s>t), \quad s,t>0.
\end{align}
It is worth noting that $H$ can be regarded as an hitting time for a Markov process whereas, $L$ can be regarded as a local time for a Markov process (\cite{BerBook}). We denote by $h$ and $l$ the following densities 
\begin{align*}
\mathbf{P}_0(H_t \in dx) = h(t,x)dx, \quad \mathbf{P}_0(L_t \in dx) = l(t,x)dx.
\end{align*}
As usual we denote by $\mathbf{P}_0$ the probability of a process started at $x=0$. 

\begin{remark}
For $\Phi(\lambda)=\lambda^\alpha$ (that is, for stable subordinators), it holds the following relation between densities
\begin{align*}
\frac{h(v,z)}{l(z,v)} = \alpha \frac{v}{z}, \quad z,v>0.
\end{align*}
This result has been stated in the famous book \cite{BerBook} without proof, thus we refer to \cite{DovSPL}.  
\end{remark}

From \eqref{relationHL}, straightforward calculation gives 
\begin{align}
\label{Lapdensityl}
\int_0^\infty e^{-\lambda t} l(t,x)\, dt = \frac{\Phi(\lambda)}{\lambda} e^{-x\, \Phi(\lambda)}, \quad \lambda>0.
\end{align}
It suffices to consider that 
$$l(t,x) = - \frac{d}{dx}\mathbf{P}_0(L_t > x) = - \frac{d}{dx}\mathbf{P}_0(t > H_x)$$
in \eqref{Lapdensityl}. We recall that (\cite{CapDovFCAA}), for a good function $f$, for $\lambda> \Phi^{-1}(w)$,
\begin{equation}
\mathbf{E}_0\left[ \int_0^\infty e^{-\lambda t} f(L_t) dt \right] =\frac{\Phi(\lambda)}{\lambda} \mathbf{E}_0 \left[ \int_0^\infty e^{- \lambda H_t} f(t)\, dt  \right].
\end{equation}

Denote by $\widetilde{l}(\lambda, x)$ the Laplace transform \eqref{Lapdensityl}. Assume that $\forall\, x>0$, for $n\geq 1$,
\begin{align}
\label{assumel}
\lim_{\lambda \to 0^+ } \lambda^n \, \widetilde{l}(\lambda, x) = 0 \quad \textrm{and} \quad \lim_{\lambda \to \infty} \lambda^n \, \widetilde{l}(\lambda, x) = 0.
\end{align}
Then, we conclude that $\forall\, x>0$, for $n\geq 1, $ $\lambda^n \, \widetilde{l}(\lambda, x) \in C_0([0, \infty))$ and in particular, 
\begin{align*}
\exists M>0\, :\, \forall\, x>0, \; | \lambda^n \, \widetilde{l}(\lambda, x) | \leq M.
\end{align*}
Thus, under \eqref{assumel},
\begin{align*}
\forall\, x >0 \quad l(\cdot, x) \in C^\infty((0, \infty)).
\end{align*}
A further check shows that ($n=0$ and formula \eqref{Lapdensityl})
\begin{align*}
\forall\, x>0 \quad l(\cdot, x) \in L_1((0, \infty)) \quad \textrm{iff} \quad \lim_{\lambda \downarrow 0} \frac{\Phi(\lambda)}{\lambda} <\infty .
\end{align*}

Concerning $\widetilde{h}(t, \xi) = e^{-t \Phi(\xi)}$ assume that $\forall\, t>0$, for $n\geq 1$, $\xi^n\, \widetilde{h}(t, \xi) \in C_0([0, \infty))$ and therefore, $\xi^n\, \widetilde{h}(t, \xi)$ is bounded as well. Then, we have that
\begin{align*}
\forall\, t>0 \quad h(t, \cdot) \in C^\infty((0, \infty)).
\end{align*}
Here we obviously have that
\begin{align*}
\forall\, t>0 \quad h(t, \cdot) \in L_1((0, \infty)).
\end{align*}

\begin{remark}
\label{rmk:LTuniqueness}
We notice that the Laplace transform $\widehat{u}(\lambda)$ with $\lambda>\omega$ exists for $u$ in the set of piecewise continuous functions of exponential order $w$. That is, given $M>0$, $\exists\, T$ such that $|u(t)|\leq M e^{t\omega}$ for $t>T$. This fact will be also discussed further on.
\end{remark}

\begin{prop}
We have that $l(z,0)=\overline{\phi}(z)$, $z>0$.
\end{prop}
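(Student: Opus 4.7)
The plan is to identify both $l(\cdot,0)$ and $\overline{\phi}$ as inverse Laplace transforms of the same function and invoke uniqueness.

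First, I would evaluate the Laplace transform \eqref{Lapdensityl} at $x = 0$. Since $e^{-x\Phi(\lambda)}\big|_{x=0} = 1$, this immediately yields
\begin{align*}
\int_0^\infty e^{-\lambda t} \, l(t,0) \, dt = \frac{\Phi(\lambda)}{\lambda}, \qquad \lambda > 0.
\end{align*}
On the other hand, \eqref{tailSymb} is exactly the statement that $\overline{\phi}$ is the inverse Laplace transform of $\lambda \mapsto \Phi(\lambda)/\lambda$:
\begin{align*}
\frac{\Phi(\lambda)}{\lambda} = \int_0^\infty e^{-\lambda z} \, \overline{\phi}(z) \, dz.
\end{align*}
Thus $l(\cdot,0)$ and $\overline{\phi}(\cdot)$ have the same Laplace transform on $(0,\infty)$.

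Next, I would invoke uniqueness of the Laplace transform, as pointed out in Remark~\ref{rmk:LTuniqueness}. One must check that both functions lie in a class for which uniqueness applies, i.e.\ are (piecewise continuous and) of exponential order. For $\overline{\phi}$, the integrability condition $\int_0^\infty (1 \wedge z) \, \phi(dz) < \infty$ together with the monotonicity of the tail ensures that $\overline{\phi}$ is locally integrable, non-increasing, and bounded away from $0$; its Laplace transform converges for all $\lambda > 0$, which is enough. For $l(\cdot,0)$, one reads it as the limiting boundary value (as $x \downarrow 0$) of the density $l(t,x) = -\frac{d}{dx}\mathbf{P}_0(L_t > x) = -\frac{d}{dx}\mathbf{P}_0(t > H_x)$, and the smoothness/decay discussion preceding the proposition (via assumption \eqref{assumel}) gives the regularity needed.

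The main obstacle is really a conceptual one rather than a computational one: one must make sense of $l(z,0)$, since the formula for $l(t,x)$ is expressed by differentiating $x \mapsto \mathbf{P}_0(H_x < t)$ and the point $x=0$ is a boundary of the time axis of the subordinator. Once $l(z,0)$ is interpreted as $\lim_{x \downarrow 0} l(z,x)$ (or, equivalently, defined via its Laplace transform using \eqref{Lapdensityl}), the identification with $\overline{\phi}(z)$ follows directly from comparing the two Laplace transforms and applying uniqueness. No further computation is required.
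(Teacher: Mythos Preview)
Your proposal is correct and follows exactly the route the paper takes: the paper's proof is the single line ``Just considering \eqref{Lapdensityl},'' and you have simply spelled out that this means setting $x=0$ in \eqref{Lapdensityl}, comparing with \eqref{tailSymb}, and invoking uniqueness of the Laplace transform. Your additional remarks on regularity and on the interpretation of $l(z,0)$ are reasonable elaborations but go beyond what the paper records.
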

\begin{proof}
Just considering \eqref{Lapdensityl}.
\end{proof}

\begin{prop}
Let us write
\begin{align*}
\kappa(z) = \int_0^\infty h(t,z)dt, \quad \textrm{and} \quad \ell(z)= l(z,0), \quad z>0. 
\end{align*}
Then, $\kappa$ and $\ell$ are associated Sonine kernels. Moreover,
\begin{align}
\label{conSonineK}
\int_\mathbb{R} \kappa(z)\, \ell(1-z)\, dz = \int_0^1 \kappa(z)\, \ell(1-z)\, dz = 1.
\end{align}
\end{prop}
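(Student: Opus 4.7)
The plan is to verify the Sonine relation directly on the Laplace side and then invert, using the formulas already recorded in the excerpt.

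First I would compute $\widetilde{\kappa}(\lambda) = \int_0^\infty e^{-\lambda z}\kappa(z)\,dz$. Since $\widetilde{h}(t,\lambda) = e^{-t\Phi(\lambda)}$, Fubini (the integrand is non-negative, so no subtlety here) yields
\begin{align*}
\widetilde{\kappa}(\lambda) = \int_0^\infty \int_0^\infty e^{-\lambda z} h(t,z)\,dz\,dt = \int_0^\infty e^{-t\Phi(\lambda)}\,dt = \frac{1}{\Phi(\lambda)}, \quad \lambda>0,
\end{align*}
using that $\Phi(\lambda)>0$ for $\lambda>0$.

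Next I would read off $\widetilde{\ell}(\lambda)$ from \eqref{Lapdensityl} evaluated at $x=0$:
\begin{align*}
\widetilde{\ell}(\lambda) = \int_0^\infty e^{-\lambda t} l(t,0)\,dt = \frac{\Phi(\lambda)}{\lambda}, \quad \lambda>0.
\end{align*}
Multiplying the two gives $\widetilde{\kappa}(\lambda)\,\widetilde{\ell}(\lambda) = 1/\lambda$, which is the Laplace transform of the constant function $1$. By the convolution theorem, the convolution $(\kappa * \ell)(t) = \int_0^t \kappa(t-s)\ell(s)\,ds$ has the same Laplace transform as $1$. Uniqueness of the Laplace transform (see Remark \ref{rmk:LTuniqueness}, with both sides being continuous in $t$) then forces $(\kappa*\ell)(t) = 1$ for every $t>0$, which is exactly the Sonine pair property. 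Specializing at $t=1$ and noting that $\kappa$, $\ell$ are supported in $(0,\infty)$, so the integrand in \eqref{conSonineK} vanishes outside $(0,1)$, gives the displayed identity.

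The only real obstacle is the justification of uniqueness/continuity at the inversion step: one must argue that $(\kappa*\ell)(t)$ is continuous in $t$ and of admissible growth so that Remark \ref{rmk:LTuniqueness} applies. Continuity follows once $\kappa,\ell \in L^1_{\mathrm{loc}}$, which in turn follows from the smoothness/integrability statements already established for $h(t,\cdot)$ and $l(\cdot,0)$ in the text; the exponential-order condition is trivial since the convolution is bounded (equal to a constant). Apart from this verification, the proof reduces to the two Laplace computations above.
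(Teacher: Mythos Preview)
Your argument is correct and follows exactly the route the paper indicates: the paper's proof is the single sentence ``It is enough to consider the Laplace transforms of $\kappa$ and $\ell$,'' and you have simply written out those two computations and the inversion step.
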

\begin{proof}
It is enough to consider the Laplace transforms of $\kappa$ and $\ell$.
\end{proof}

%\begin{center}
%\includegraphics[scale=.4]{IMG/invPicture.pdf}
%\end{center}

We notice that, for a stable subordinator, for which $\Phi(z)=z^\alpha$,
\begin{align*}
\mathbf{E}_0 \left[ \int_0^\infty e^{-\lambda H_t} dt \right] = \frac{1}{\lambda^\alpha} = \int_0^\infty e^{-\lambda x} \frac{x^{\alpha-1}}{\Gamma(\alpha)} dx
\end{align*}
from which we obtain the potential density of $H_t$, that is
\begin{align*}
\int_0^\infty h(t,x)dt = \frac{x^{\alpha-1}}{\Gamma(\alpha)}=: \kappa(x), \quad x >0.
\end{align*}
The inverse of $\lambda^{\alpha-1}$ gives
\begin{align*}
l(t,0) = \frac{t^{-\alpha}}{\Gamma(1-\alpha)} =: \ell(t), \quad t>0.
\end{align*}
Thus, formula \eqref{conSonineK} is verified. The convolutions $\kappa *u$ and $\ell *u$ can be used in order to define respectively the fractional integral and the fractional derivative of a given function $u$. Indeed, 
\begin{align*}
\mathcal{D}^\alpha_x u(x) := \frac{d}{dx} \mathcal{I}^{1-\alpha}_x u(x) = \frac{1}{\Gamma(1-\alpha)} \frac{d}{dx} \int_0^x u(y) (x-y)^{-\alpha} dy  
\end{align*}
is the well-known Riemann-Liouville derivative written in terms of the associated fractional integral. Notice that
\begin{align*}
\mathcal{D}^\alpha_x u(x) = \frac{d}{dx} (u * \ell)(x), \quad \mathcal{I}^\alpha_x u(x) = (u * \kappa)(x).
\end{align*}
We also recall the Caputo-Dzherbashian derivative
\begin{align*}
D^\alpha_x u(x) := \frac{1}{\Gamma(1-\alpha)} \int_0^x u^\prime(s)(x-s)^{-\alpha} ds = \mathcal{D}^\alpha_x \big(u(x) - u(0) \big) 
\end{align*}
where $u^\prime = du/ds$. The Caputo-Dzherbashian derivative (also termed Caputo derivative) has been introduced by Caputo in \cite{caputoBook, CapMai71a, CapMai71b} and previously by Dzherbashian in \cite{Dzh66,DzhNers68}. The Riemann-Liouville derivatives have a long history. \\

The previous operators are obtained by convolution and represent well-known objects in fractional calculus where $\alpha$ has a clear meaning in terms of fractional order. Some criticisms have been also reported in the literature about the definition of derivative and integral for some operators (see for example \cite{OrtTMac}). In the following, we consider non-local operators for which we do not have a direct link with a fractional order but with a symbol $\Phi$ and therefore, with a convolution kernel (singular or not, see the last section). We basically move from the following "types" of operators:
\begin{itemize}
\item[i)] Marchaud (type) operators,
\begin{align*}
\mathbf{D}^{\Phi}_{x-} u(x) = \int_0^\infty \big( u(x) - u(x-y) \big)  \phi(dy)
\end{align*}
and
\begin{align*}
\mathbf{D}^{\Phi}_{x+} u(x) = \int_0^\infty \big( u(x) - u(x+y) \big)  \phi(dy);
\end{align*} 
\item[ii)] Riemann-Liouville (type) operators,
\begin{align*}
\mathcal{D}^\Phi_{x-} u(x) = \frac{d}{dx} \int_{-\infty}^x u(x-y) \overline{\phi}(y)dy
\end{align*}
and
\begin{align*}
\mathcal{D}^\Phi_{x+} u(x) = \frac{d}{dx} \int^{\infty}_x u(x-y) \overline{\phi}(y)dy;
\end{align*}
\item[iii)] Caputo-Dzherbashian (type) operators,
\begin{align*}
\mathfrak{D}^\Phi_t u(t) = \int_0^t u^\prime(t-s) \overline{\phi}(y)dy 
\end{align*}
where $u^\prime=du/dt$ is the derivative of $u(t)$.
\end{itemize}
These operators coincide in a certain class of functions. A key fact is played by the extension with zero for the negative part of the real line. That is, for those functions $u$ for which $u(x)=0$ for $x\leq 0$, a quick check shows that, under the right assumptions for existence of Laplace transforms in each definition, the Laplace symbols coincide.

\subsection{Non-local (space) operators}
\label{spaceNLop}

Let $(G, D(G))$ be the generator of the semigroup $Q_t$. We consider the representation   
\begin{align}
\label{PhilRep}
-\Phi(-G) f(x) = \int_0^\infty \big( Q_t f(x) -  f(x) \big) \phi(dt) 
\end{align}
given by Phillips where $\Phi$ has been introduced in \eqref{LevKinFormula}. This nice representation has been considered in \cite{Phillips52} and previously, for $G=\Delta$ in  \cite{Bochner49}. Indeed, in case of subordinate Brownian simigroups obtained via stable measure, we also refer to Bochner's subordination. For a general Markov process with generator $G$ we refer to Phillips. Let $\Psi$ be the Fourier multiplier of $G$. Then the Fourier symbol of the semigroup $Q_t = e^{tG}$ is written as $\widehat{Q_t} = e^{-t \Psi}$. For a well-defined function $f$, from \eqref{LevKinFormula} we have that
\begin{align*}
\int_{\mathbb{R}} e^{i\xi x} \left( -\Phi(-G) f(x) \right) dx 
= &  \left( \int_0^\infty \left( e^{-t \Psi(\xi)} -1 \right) \phi(dt) \right) \widehat{f}(\xi)\\
= & -\Phi \circ \Psi(\xi)\, \widehat{f}(\xi).
\end{align*}
Thus, $\Phi \circ \Psi$ is the Fourier multiplier of $A=-\Phi(-G)$.

\begin{remark}
The subordinator $H$ with symbol $\Phi$ has a density $h$ satisfying 
$$\partial_t h = - \Phi(\partial_x) h.$$
See the Phillips' representation with the translation semigroup $$e^{-t\partial_x}v(x)=v(x-t).$$
\end{remark}

\begin{remark}
We observe that
\begin{align*}
D^\alpha_x u(x) = \Phi(\partial_x) u(x) - \frac{x^{-\alpha}}{\Gamma(1-\alpha)} u(0).
\end{align*}
Thus, $\Phi(\partial_x) u(x)$ gives a Riemann-Liouville operator.
\end{remark}

\begin{remark}
(Dirichlet boundary condition) In a general setting, for compact domains, we have the compact representation
\begin{align}
\label{spectLap}
\Phi(-\Delta_\Omega) v(x) = \sum_{k} \Phi(\mu_k ) (v, e_k)\, e_k(x), \quad x \in \Omega
\end{align}
with $v(x)=0$ if $x \in \partial \Omega$. A sketch of proof can be given by considering that
\begin{align*}
Q_t e_k(x) = e^{-t \,\mu_k} e_k(x).
\end{align*} 
From the Phillips' representation and the fact that
\begin{align*}
v(x) = \sum_k (v, e_k)\, e_k(x)
\end{align*}
we get \eqref{spectLap}. Then, the equation is satisfied term by term, the convergence is easily verified by considering the suitable space $H^s$ of functions $v\in L^2$ such that $\sum_k (\mu_k)^{2s} (v,e_k) < \infty$. Observe that $\Phi(\mu_k)$ is written as in \eqref{LevKinFormula}, thus the spectral representation \eqref{spectLap} coincides with \eqref{PhilRep}.
\end{remark}

\begin{remark}
We note that if $H$ is the stable subordinator with symbol  $\Phi(\xi)=\xi^\alpha$ and  $Q_t$ is the semigroup of a Brownian motion on $\mathbb{R}^d$ with $\Psi(\xi)= |\xi|^2$,  than $|\xi |^{2\alpha}$ is the symbol $\Psi \circ \Phi$ of the fractional Laplacian
\begin{align}
\label{fracLaplacianS}
-\Phi(-G) = -(-\Delta)^{\alpha}, \quad \alpha\in (0,1)
\end{align}
where $(-\Delta)^\alpha u$ is defined as Cauchy principal value. The Phillips' representation \eqref{fracLaplacianS} holds only on $\mathbb{R}^d$, $d\geq 1$. We underline that the Phillips' representation on $\Omega \subset \mathbb{R}^d$ coincides with the spectral definition $-(-\Delta_\Omega)^\alpha$ where $(\Delta_\Omega, D(\Delta_\Omega))$ is the generator of a Brownian motion on $\Omega$. Thus, we obtain $-\Phi(-\Delta_\Omega)$  as in \eqref{spectLap} in case $\Delta_\Omega$ is the Dirichlet Laplacian.
\end{remark}

\subsection{Non-local (time) operators}
\label{sec:timeNLop}

Let $M>0$ and $w\geq 0$. Let $\mathcal{M}_w$ be the set of (piecewise) continuous functions on $[0, \infty)$ of exponential order $w$ such that $|u(t)| \leq M e^{wt}$. Denote by $\widetilde{u}$ the Laplace transform of $u$. Then, we define the operator $\mathfrak{D}^\Phi_t : \mathcal{M}_w \mapsto \mathcal{M}_w$ such that
\begin{align}
\label{lapDop}
\int_0^\infty e^{-\lambda t} \mathfrak{D}^\Phi_t u(t)\, dt = \Phi(\lambda) \widetilde{u}(\lambda) - \frac{\Phi(\lambda)}{\lambda} u(0), \quad \lambda > w
\end{align}
where $\Phi$ is given in \eqref{LevKinFormula}. Since $u$ is exponentially bounded, the integral $\widetilde{u}$ is absolutely convergent for $\lambda>w$.  By Lerch's theorem the inverse Laplace transforms $u$ and $\mathfrak{D}^\Phi_tu$ are uniquely defined. We note that
\begin{align}
\label{PhiConv}
\Phi(\lambda) \widetilde{u}(\lambda) - \frac{\Phi(\lambda)}{\lambda} u(0) = & \left( \lambda \widetilde{u}(\lambda) - u(0) \right) \frac{\Phi(\lambda)}{\lambda}
\end{align}
and thus, $\mathfrak{D}^\Phi_t$ can be written as a convolution involving the ordinary derivative and the inverse transform of \eqref{tailSymb} iff $u \in \mathcal{M}_w \cap C([0, \infty), \mathbb{R}_+)$ and $u^\prime \in \mathcal{M}_w$. By Young's inequality\footnote{(Young's convolution inequality.) Suppose that $f \in L_p(\mathbb{R})$, $g \in L_q(\mathbb{R})$ and $1/p + 1/q = 1/r+1$ with $1\leq p,q,r \leq \infty$. Then
\begin{align*}
\| f*g\|_r \leq \|f\|_p\, \|g\|_q.
\end{align*}
Now set $f=u^\prime$ and $g=\overline{\phi}$. Take the limit for $\lambda\to 0^+$ of the Laplace transform $\widetilde{g}(\lambda)$ in order to obtain $\|g\|_1$.} we also observe that
\begin{align}
\label{YoungSymb}
\int_0^\infty |\mathfrak{D}^\Phi_t u |^p dt \leq \left( \int_0^\infty |u^\prime |^p dt \right) \left( \lim_{\lambda \downarrow 0} \frac{\Phi(\lambda)}{\lambda} \right)^p, \qquad p \in [1, \infty)
\end{align}
where $\lim_{\lambda \downarrow 0} \Phi(\lambda) /\lambda$ is finite only in some cases. We notice that when $\Phi(\lambda)=\lambda$ (that is we deal with the ordinary derivative) we have that $H_t = t$ and $L_t=t$ a.s. and in \eqref{YoungSymb} the equality holds. We observe that $\mathfrak{D}^\Phi_t u$ is well-defined for $u$ such that $u,u^\prime \in \mathcal{M}_w$. For $w=0$, $u \in C_b([0, \infty))$ is a Lipschitz function on $[0, \infty)$. Indeed, we are considering $u$ with (piecewise) continuous and bounded derivative. Formula \eqref{YoungSymb} gives the relevant information stated in the following propositions.

\begin{prop}
\label{PropYoung1}
Let $\Phi$ be such that
\begin{align*}
\lim_{\lambda \downarrow 0} \frac{\Phi(\lambda)}{\lambda}  = \Phi_0 < \infty.
\end{align*}
If $u \in AC([0, \infty))$, then $\mathfrak{D}^\Phi_t u \in L_1([0, \infty))$.
\end{prop}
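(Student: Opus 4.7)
The plan is to represent $\mathfrak{D}^\Phi_t u$ as a convolution and then to apply the Young inequality recorded in the footnote attached to \eqref{YoungSymb}. Since the hypothesis $\Phi_0 < \infty$ guarantees integrability of the convolution kernel, and since $AC([0,\infty))$ (interpreted as $u(t) = u(0) + \int_0^t u'(s)\,ds$ with $u' \in L_1([0,\infty))$) guarantees integrability of the other factor, the conclusion is essentially a one-line estimate once the convolution identity has been set up.

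I would first exploit the factorisation \eqref{PhiConv},
\begin{align*}
\Phi(\lambda)\widetilde{u}(\lambda) - \frac{\Phi(\lambda)}{\lambda}u(0) = \bigl(\lambda \widetilde{u}(\lambda) - u(0)\bigr)\, \frac{\Phi(\lambda)}{\lambda}.
\end{align*}
By \eqref{tailSymb} the second factor is the Laplace transform of the tail $\overline{\phi}$; by absolute continuity the first factor is the Laplace transform of $u'$. Lerch uniqueness (Remark \ref{rmk:LTuniqueness}) then identifies, for almost every $t>0$,
\begin{align*}
\mathfrak{D}^\Phi_t u(t) = (u' * \overline{\phi})(t).
\end{align*}
Applying Young's inequality with $p=q=r=1$ (so that $1/p+1/q = 1/r+1$) yields
\begin{align*}
\|\mathfrak{D}^\Phi_t u\|_{L_1} \leq \|u'\|_{L_1}\,\|\overline{\phi}\|_{L_1}.
\end{align*}
The norm $\|\overline{\phi}\|_{L_1}$ is obtained by letting $\lambda \downarrow 0$ in \eqref{tailSymb} and invoking monotone convergence,
\begin{align*}
\|\overline{\phi}\|_{L_1} = \int_0^\infty \overline{\phi}(z)\,dz = \lim_{\lambda \downarrow 0} \frac{\Phi(\lambda)}{\lambda} = \Phi_0,
\end{align*}
which is finite by hypothesis, giving $\mathfrak{D}^\Phi_t u \in L_1([0,\infty))$.

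The main obstacle I anticipate is not analytic but interpretative: on the unbounded half-line $AC([0,\infty))$ admits more than one convention, and the conclusion genuinely needs $u' \in L_1([0,\infty))$ (equivalently, $u$ of bounded variation on $[0,\infty)$) rather than mere local absolute continuity. Provided the assumption is read in this natural sense, the proof reduces to the convolutional representation above and the tail identity, which together give exactly the $p=1$ case of \eqref{YoungSymb}.
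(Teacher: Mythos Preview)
Your proposal is correct and follows essentially the same route as the paper: the paper's proof is the single line ``The statement follows from \eqref{YoungSymb} for $p=1$,'' and the discussion leading up to \eqref{YoungSymb} is precisely the convolution identification $\mathfrak{D}^\Phi_t u = u' * \overline{\phi}$ via \eqref{PhiConv}--\eqref{tailSymb} together with Young's inequality and the evaluation $\|\overline{\phi}\|_{L_1} = \lim_{\lambda \downarrow 0} \Phi(\lambda)/\lambda$ that you spell out. Your remark on the reading of $AC([0,\infty))$ (namely that $u' \in L_1$ globally is what is actually used) is also apt.
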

\begin{proof}
The statement follows from \eqref{YoungSymb} for $p=1$.
\end{proof}

\begin{prop}
\label{PropYoung2}
Let us consider the parabolic problem $\mathfrak{D}^\Phi_t u = A u$ for some generator $(A,D(A))$ and a given $\Phi$. Then, the existence of the corresponding elliptic problem $-A\bar{u} = \Phi_0 f$ is given according with  \eqref{YoungSymb}.
\end{prop}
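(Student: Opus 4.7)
The plan is to obtain $\bar u$ by integrating the parabolic solution $u$ over $[0,\infty)$ in time and then reading off the elliptic equation from the Laplace identity \eqref{lapDop} evaluated at $\lambda\downarrow 0$. Set
\[
\bar u(x) := \int_0^\infty u(t,x)\,dt = \lim_{\lambda\downarrow 0}\widetilde u(\lambda,x),
\]
whenever this object is meaningful.

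The first step is to ensure that every object is integrable in $t$. Invoking \eqref{YoungSymb} with $p=1$ (that is, Proposition \ref{PropYoung1}), under the hypothesis $\Phi_0<\infty$ and $u(\cdot,x)\in AC([0,\infty))$ we have $\mathfrak{D}^\Phi_t u(\cdot,x)\in L_1((0,\infty))$. Because the parabolic equation asserts $Au=\mathfrak{D}^\Phi_t u$, this forces $Au(\cdot,x)\in L_1$ as well, which is the regularity needed to define $A\bar u$: closedness of $(A,D(A))$ then permits pulling $A$ through the time integral, giving $A\bar u(x)=\int_0^\infty Au(t,x)\,dt$.

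The second step is to compute the right-hand side of this identity directly from \eqref{lapDop}. Letting $\lambda\downarrow 0$ and using the factorisation \eqref{PhiConv},
\[
\int_0^\infty \mathfrak{D}^\Phi_t u(t,x)\,dt
= \lim_{\lambda\downarrow 0}\frac{\Phi(\lambda)}{\lambda}\bigl(\lambda\widetilde u(\lambda,x)-u(0,x)\bigr)
= -\Phi_0\, f(x),
\]
where $f(x)=u(0,x)$ is the initial datum and we have used $\lim_{\lambda\downarrow 0}\lambda\widetilde u(\lambda,x)=0$ (final-value theorem, since the time-changed solution decays as $t\to\infty$). Combining the two steps yields
\[
-A\bar u(x)= -\int_0^\infty \mathfrak{D}^\Phi_t u(t,x)\,dt = \Phi_0\, f(x),
\]
which is the announced elliptic problem. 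The fact that this entire argument rests on the finiteness $\Phi_0<\infty$, inherited from \eqref{YoungSymb} with $p=1$, is exactly the meaning of ``existence according with \eqref{YoungSymb}''.

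The main obstacles are the two limit exchanges: commuting the (possibly unbounded) generator $A$ with the time integral, which requires closedness of $(A,D(A))$ together with $u(t,\cdot)\in D(A)$ for a.e.\ $t$; and the vanishing $\lambda\widetilde u(\lambda,\cdot)\to 0$ as $\lambda\downarrow 0$, which in turn demands sufficient time-decay of $u$ and is the step that fails as soon as $\Phi_0=\infty$. Once these are granted for the class of $u$ and $\Phi$ at hand, the identity $-A\bar u=\Phi_0 f$ is immediate.
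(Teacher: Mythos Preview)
Your proposal is correct and follows essentially the same route as the paper: define $\bar u=\int_0^\infty u\,dt$, take the Laplace identity \eqref{lapDop} at $\lambda\downarrow 0$, and use Proposition~\ref{PropYoung1}/\eqref{YoungSymb} to justify the limit, arriving at $-A\bar u=\Phi_0 f$. You have in fact been more explicit than the paper about the two technical points the paper leaves implicit---commuting the closed operator $A$ with the time integral, and the vanishing $\lambda\widetilde u(\lambda,\cdot)\to 0$ (equivalently $\Phi(\lambda)\widetilde u(\lambda)\to 0$)---but the strategy is identical.
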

\begin{proof}
Let us consider $\bar{u}(x) = \int_0^\infty u(t,x)dt$ where $u$ is the solution to $\mathfrak{D}^\Phi_t u = Au$ with $u_0=f \in D(A)$. Set $F(\lambda) := \int_0^\infty e^{-\lambda t} \mathfrak{D}^\Phi_t u\, dt$. Under the assumptions in Proposition \ref{PropYoung1}, as $\lambda \downarrow 0$, there exists $\Phi_0 = \Phi^\prime(0)$ such that $F \to -\Phi_0 f$ and $-A\bar{u}= \Phi_0\, f$. 
\end{proof}

\begin{remark}
\label{remark:telegr}
We recall that for $\Phi(\lambda)=\lambda^\alpha$, the symbol of a stable subordinator of order $\alpha$, the operator $\mathfrak{D}^\Phi_t$ becomes the Caputo-Dzherbashian derivative
\begin{align*}
\mathfrak{D}^\Phi_t u(t) = \frac{1}{\Gamma(1-\alpha)} \int_0^t \frac{u^\prime(s)}{(t-s)^\alpha}ds =: D^\alpha_t u(t)
\end{align*}
with $u^\prime(s)=du/ds$. Notice that for $u(s)=s^{\alpha} /\Gamma(\alpha + 1)$, we get that
\begin{align*}
\mathfrak{D}^\Phi_t u(t) = 1.
\end{align*}
\end{remark}

\begin{remark}
A further example is given by the symbol $\Phi(\lambda)=\lambda^{2\alpha} + \lambda^\alpha$ for $\alpha \in (0, 1/2)$. $\mathfrak{D}^\Phi_t$ becomes the fractional telegraph operator
\begin{align*}
\mathfrak{D}^\Phi_t u =  D^{2\alpha}_t u + D^\alpha_t u.
\end{align*}
The telegraph equation 
\begin{align*}
\frac{\partial^2 u}{\partial t^2} + \frac{\partial u}{\partial t} = \Delta u 
\end{align*}
is associated with the telegraph process 
\begin{align*}
X_t = \int_0^t V_s \, ds = \int_0^t (-1)^{N_s} ds
\end{align*}
where $N_s$ is a Poisson process. We are able to write the corresponding Langevin equation for the position $X_t$ and the velocity $V_t$. The fractional telegraph equation and the probabilistic representation of the solution has been investigated in \cite{DovTO14a, DovTO14b} and in a general setting in \cite{DovPol17}. The probabilistic representation of the fractional telegraph equation has an interesting connection with the inverse of the sum of independent stable subordinators. A further reading in case of higher dimension can be found in \cite{hTelegraph}.
\end{remark}

The operator $\mathfrak{D}^\Phi_t$ (and the associated non-local Cauchy problem) has been considered, with some alternative representation in \cite{Koc2011, Toaldo2015, Chen2017, Ascione} reported here in chronological order. Concerning the fractional Cauchy problem we also recall the works \cite{BM2001, DovSPA, DovNan, MNV09}. Further references will be given below for the non-local Cauchy problem.

\subsection{Non-local operators and subordinators}
\label{Sec:GenNLops}
Let us write the equations governing $H$ and $L$ in case $\Phi$ does not include the time-dependent-continuity for $h$ (see \cite{ColDov1, ColDov2}). In particular, we consider densities associated with $\Phi$ such that
\begin{align*}
h(t,0) = \lim_{\lambda \to \infty} \lambda e^{-t\Phi(\lambda)} = \lim_{\lambda\to \infty} \exp \left(\ln \lambda - t  \Phi(\lambda) \right) = 0 \quad \forall\, t>0.
\end{align*}
That is,
\begin{align*}
\forall\, t>0, \quad t \Phi(\lambda) - \ln \lambda \to \infty  \quad \textrm{as} \quad \lambda \to \infty.
\end{align*}
For our convenience here we write $u_f$ for a function $u=u(t,x)$ such that $u(t,x)\to f(x)$ as $t\downarrow 0$. We also recall that, for $p\geq 1$, $W^{1,p}$ is the set of functions in $L^p$ with first derivative in $L^p$. The set $W^{1,p}_0$ is the collection of functions in $W^{1,p}$ extended with zero on $(-\infty, 0]$. The function
\begin{align*}
h_f \in C^{1,1}((0,\infty), W^{1,1}_0(0,\infty))
\end{align*}
written as
\begin{align*}
h_f(t,x) 
= & \int_0^x f(x-y) h(t,y)dy = \mathbf{E}_0[f(x-H_t) \mathbf{1}_{(t< L_t)}]
\end{align*}
is the unique solution to
\begin{equation}
\begin{cases}
\displaystyle \frac{\partial}{\partial t} h_f(t,x) = - \mathbf{D}^\Phi_{x-} h_f(t,x), \quad (t,x) \in (0, \infty) \times (0, \infty)\\
\displaystyle h_f(t,0)=0, \quad t>0\\
\displaystyle h_f(0,x)=f(x) \in W^{1,1}_0(0,\infty)
\end{cases}
\label{eqHf}
\end{equation}
and, the function
\begin{align*}
l_f \in C^{1,1}(W^{1,\infty}(0,\infty), (0,\infty))
\end{align*}
written as
\begin{align*}
l_f(t,x) 
= & \int_0^x f(x-y) l(t,y)dy = \mathbf{E}_0[f(x-L_t) \mathbf{1}_{(t< H_t)}]
\end{align*}
is the unique solution to
\begin{equation}
\begin{cases}
\displaystyle \mathfrak{D}^\Phi_t l_f(t,x) = - \frac{\partial}{\partial x}l_f(t,x), \quad (t,x) \in (0, \infty) \times (0, \infty)\\
\displaystyle l_f(t,0)=0, \quad t>0\\
\displaystyle l_f(0,x)=f(x) \in L^1(0,\infty).
\end{cases}
\label{eqLf}
\end{equation}
Notice that in the latter problem, as $f\in L^p(0,\infty)$ with $p\geq 1$, we get $l_f(t,\cdot) \in L^p(0,\infty)$ $\forall\, t>0$ (just by applying the Young's inequality). Moreover, we observe that $h_f$ and $l_f$ may be regular more that $C^{1,1}$.

Let us write
\begin{equation}
\bar{h}_f(x)= \int_0^\infty h_f(t,x)dt \quad \textrm{and} \quad \bar{l}_f(x)= \int_0^\infty l_f(t,x)dt.
\end{equation}
We can immediately check that the Abel (type) equation $f(x) = \mathbf{D}^\Phi_{x-} \bar{h}_f(x)$ gives the elliptic problem associated with \eqref{eqHf}. On the other hand, the elliptic problem associated with \eqref{eqLf} exists only if $\lim_{\lambda \downarrow 0} \Phi(\lambda)/\lambda < \infty$. Such a fact is not surprising, indeed by considering $f=\mathbf{1}$,
\begin{align*}
\bar{l}_\mathbf{1}(x)= \int_0^\infty \mathbf{E}_0 [\mathbf{1}_{(t< H^\Phi_x)}] dt = \mathbf{E}_0[H^\Phi_x]= x \lim_{\lambda \downarrow 0} \frac{\Phi(\lambda)}{\lambda}.
\end{align*}
In case the elliptic problem exists, it takes the form
\begin{align*}
f(x) =   \left( \lim_{\lambda \downarrow 0} \frac{\lambda}{\Phi(\lambda)}\right) \frac{\partial}{\partial x} \bar{l}_f(x).
\end{align*}

Now we move to the PDEs connection concerned respectively with non-local equations and non-local boundary conditions. 

\section{Non-local initial value problems}

First we recall that for $u(t,x)=Q_t f(x)$ solving the problem
\begin{align*}
\frac{\partial u}{\partial t} = G u, \quad u_0=f \in D(G)
\end{align*}
we have the probabilistic representation
\begin{align*}
u(t,x)= \mathbf{E}_x[f(X_t)]
\end{align*}
where $X=\{X_t\}_{t\geq 0}$ on $E$ has generator $(G, D(G))$. Then, we define the time-changed process 
$$X^{L}_t :=X_{L_t}, \quad t \geq 0$$  
on $E$ as the composition $X \circ L$ and the time-changed process 
$$X^H_t := X_{H_t}, \quad t>0$$ 
on $E$ as the composition $X \circ H$. The governing equations of $X^L_t$ and $H^H_t$ together with their properties have been extensively investigated in last years.

%We also introduce the $\lambda$-potentials
%\begin{align*}
%R^L_\lambda f(x) = \mathbf{E}_x \left[ \int_0^\infty e^{-\lambda t} f(X^L_t)\,dt \right], \quad \lambda >0,
%\end{align*}
%and
%\begin{align*}
%R^H_\lambda f(x) = \mathbf{E}_x \left[ \int_0^\infty e^{-\lambda t} f(X^H_t)\, dt \right], \quad \lambda >0.
%\end{align*}

\subsection{Parabolic problems}
Let us consider the time non-local Cauchy problem
\begin{align}
\label{time-frac-problem}
\mathfrak{D}^\Phi_t u = G u, \qquad u_0=f \in D(G).
\end{align}
The probabilistic representation of the solution to \eqref{time-frac-problem} is written in terms of the time-changed process $X^{L} = \{X^L_t\}_{t\geq 0}$, that is 
\begin{align}
\label{sol-time-frac-problem}
u(t,x) = \mathbf{E}_x[f(X^{L}_t)]
\end{align}
and
\begin{align*}
\mathbf{E}_x[f(X^{L}_t)]= \mathbf{E}_x[f({^*X}^{L}_t), t < \zeta^{L}] :=Q^{L}_t f(x) 
\end{align*}
where $\zeta^{L}$ is the lifetime of $X^{L}$, the part process of ${^*X}^{L} = \{{^*X}^{L}_t\}_{t\geq0}$ on $E$.  The fact that $L$ is continuous implies that 
\begin{align}
\label{semigPhi-L-continuity}
\mathbf{E}_x[f({^*X}^{L}_t), t < \zeta^{L}] = \mathbf{E}_x[f({^*X}_{L_t}), \, L_t < \zeta] =  \int_0^\infty Q_s f(x) \mathbf{P}_0(L_t \in ds) 
\end{align}
where $Q_tf(x) = \mathbf{E}_x [f(X_t)] = \mathbf{E}_x [f({^*X}_t), t < \zeta]$ and $\zeta$ is the lifetime of $X$, the part process of ${^*X}=\{ {^*X}_t\}_{t\geq 0}$ on $E$. Indeed, $X$ on $E$ (and therefore $X^L$ on $E$) is obtained by killing ${^*X}$ on $E^* \supset E$ (${^*X}^{L}$ on $E^* \supset E$). For the  time-changed process we have that
\begin{align}
\label{semigPhi-1}
Q^{L}_t \mathbf{1}_E = \mathbf{P}_x(t < \zeta^{L}) = \mathbf{P}_x(L_t < \zeta) = \int_0^\infty \mathbf{P}_x(s < \zeta) \mathbf{P}_0(L_t \in ds).
\end{align}
We notice that $Q_t^{L}$ is not a semigroup (indeed the random time is not Markovian). For the process $X$ with generator $(G, D(G))$  and the independent subordinator $H$ with symbol \eqref{LevKinFormula}, the process $X^H_t=X_{H_t}$, $t\geq 0$ can be considered in order to solve the problem
\begin{align}
\label{space-frac-problem} 
\frac{\partial u}{\partial t} = - \Phi( - G) u, \quad u_0=f \in D(\Phi( - G)) \subset D(G).
\end{align} 
The probabilistic representation of the solutions to \eqref{space-frac-problem} is given by
\begin{align*}
u(t,x) =\mathbf{E}_x [f(X^{H}_t)]
\end{align*}
and
\begin{align*}
\mathbf{E}_x [f(X^{H}_t)] = \mathbf{E}_x [f({^*X}^{H}_t), t < \zeta^{H}] =: Q^{H}_t f(x)
\end{align*}
where $\zeta^{H}$ is the lifetime of $X^{H}$, that is the part process of ${^*X}^{H}$ on $E$. We notice that if $H$ is a stable subordinator with symbol $\Phi(\lambda)=\lambda^\alpha$ and $X$ is a Brownian motion, then $-\Phi(-G)$ is the fractional Laplacian. Since $H$ is not continuous, $X^H$ may have jumps. We underline the fact that $\Phi(-G)$ is a generator of a Markov process. Indeed,  $X^H$ is the composition of Markov processes. Thus, we may consider also $A=-\Phi(-G)$ in the next theorem.

Now we state the following result (see \cite[Theorem 5.2]{CapDovFCAA}). 

\begin{theorem}
\label{time-frac-THM}
Let $(A, D(A))$ be the generator of the Feller process $X=(X_t, t\geq 0)$ on $E$. Then, $u(t,x) = \mathbf{E}_x[f(X^L_t)]$, $t\geq0$, $x \in E$ is the unique strong solution in $L^2(E)$ to 
\begin{align}
\label{time-frac-problem-A}
\mathfrak{D}^\Phi_t u = Au, \quad u_0=f \in D(A)
\end{align}
in the sense that:
\begin{enumerate}
\item $\varphi: t \mapsto u(t, \cdot)$ is such that $\varphi \in C([0, \infty), \mathbb{R}_+)$ and $\varphi^\prime  \in \mathcal{M}_0$,
\item $\vartheta : x \mapsto u(\cdot, x)$ is such that $\vartheta, A\vartheta \in D(A)$,
\item $\forall\, t > 0$, $\mathfrak{D}^\Phi_t u(t,x) = A u(t,x)$ holds a.e in $E$,
\item $\forall\, x \in E$, $u(t,x) \to f(x)$ as $t \downarrow 0$.
\end{enumerate}
\end{theorem}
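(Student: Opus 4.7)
The plan is to reduce the verification of the PDE identity to an algebraic identity for the resolvent of $A$ via the time Laplace transform. The starting point is formula \eqref{semigPhi-L-continuity}, which by Fubini gives the subordination representation
\begin{align*}
u(t,x) = \int_0^\infty Q_s f(x)\, l(t,s)\, ds.
\end{align*}
Taking the Laplace transform in $t$ and applying \eqref{Lapdensityl} inside the integral yields
\begin{align*}
\widetilde{u}(\lambda,x) \;=\; \frac{\Phi(\lambda)}{\lambda}\int_0^\infty e^{-s\Phi(\lambda)}Q_s f(x)\,ds \;=\; \frac{\Phi(\lambda)}{\lambda}\,R_{\Phi(\lambda)}f(x),
\end{align*}
where $R_\mu=(\mu-A)^{-1}$ is the resolvent of $A$, which exists for $\mu=\Phi(\lambda)>0$ because $A$ generates a Feller contraction semigroup.

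Next I apply \eqref{lapDop} and exploit the resolvent identity $AR_\mu f=\mu R_\mu f-f$ to compute
\begin{align*}
\Phi(\lambda)\widetilde{u}(\lambda,x)-\frac{\Phi(\lambda)}{\lambda}f(x) \;=\; \frac{\Phi(\lambda)}{\lambda}\bigl[\Phi(\lambda) R_{\Phi(\lambda)}f(x)-f(x)\bigr] \;=\; A\widetilde{u}(\lambda,x).
\end{align*}
Because $f\in D(A)$ and $Q_s$ preserves $D(A)$ with $AQ_s f=Q_s Af$, a second Fubini gives $A\widetilde{u}=\widetilde{Au}$. Invoking Lerch's theorem (Remark \ref{rmk:LTuniqueness}) yields $\mathfrak{D}^\Phi_t u(t,x)=Au(t,x)$ a.e.\ in $t$, which is item (3); uniqueness is then immediate, since any two strong solutions generate the same Laplace transform.

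The remaining items are read off from the same representation. For (4), $L_t\downarrow 0$ a.s.\ as $t\downarrow 0$ and right-continuity of the Feller paths of $X$ give $f(X^L_t)\to f(x)$, and dominated convergence (using $f\in D(A)\subset C_0(E)$) transfers this to $u(t,x)\to f(x)$. For (1), the continuity of $L$ and the sup-norm contractivity of $Q_s$ deliver continuity of $\varphi:t\mapsto u(t,\cdot)$; the exponential-order bound for $\varphi'$ is extracted by differentiating under the integral using $\partial_t l(t,s)$ and transferring a time derivative onto $Q_s f$ via $\partial_s Q_s f=Q_s Af$ with integration by parts. For (2), closedness of $A$ combined with $AQ_s f=Q_s Af$ lets $A$ pass through the $s$-integral, so $u(t,\cdot)\in D(A)$ with $Au(t,x)=\int_0^\infty Q_s Af(x)\,l(t,s)\,ds$, and iterating gives $Au(t,\cdot)\in D(A)$ as required.

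The main obstacle is the exponential-order control demanded by \eqref{lapDop} and by Lerch's theorem: one must verify that both $u$ and $u'$ belong to $\mathcal{M}_w$ for some $w\ge 0$, which is where item (1) carries real content and does not come for free from the probabilistic representation. A secondary subtlety is reconciling the pointwise, sup-norm Feller framework (natural for the time-changed process $X^L$) with the $L^2(E)$ statement of the theorem; the clean route is to establish the identity first on $D(A)$ under uniform convergence and then extend to $L^2(E)$ by density together with the $L^2$-contractivity of $Q_s$ under the standing hypotheses.
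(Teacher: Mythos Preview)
The paper does not supply its own proof of this theorem: the statement is quoted from \cite[Theorem 5.2]{CapDovFCAA} and no argument is reproduced here. There is therefore nothing in the present paper to compare your proof against line by line.

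That said, your strategy is exactly the one the surrounding text sets up and is the standard route to this result. The paper defines $\mathfrak{D}^\Phi_t$ through its Laplace symbol \eqref{lapDop}, records the key transform \eqref{Lapdensityl} for the density of $L$, and invokes Lerch's theorem in Remark~\ref{rmk:LTuniqueness}; you assemble precisely these ingredients via the resolvent identity, which is how the cited reference proceeds as well. Your identification of the two delicate points is accurate: the membership $\varphi,\varphi'\in\mathcal{M}_0$ is what legitimizes the Laplace machinery (this is why item (1) is part of the \emph{definition} of strong solution here), and the passage from the $C_0(E)$ Feller setting to the $L^2(E)$ statement requires the density/contractivity argument you sketch. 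One small refinement: in item (1) you propose to obtain the bound on $\varphi'$ by differentiating $l(t,s)$ under the integral and integrating by parts against $\partial_s Q_s f$; in practice it is cleaner to first show $Au(t,\cdot)=\int_0^\infty Q_sAf(x)\,l(t,s)\,ds$ is bounded uniformly in $t$ (since $\|Q_sAf\|\le\|Af\|$ and $l(t,\cdot)$ is a probability density), and then infer the control on $\mathfrak{D}^\Phi_t u$ directly from the equation, rather than attacking $\partial_t u$ head-on.
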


We underline that there is a consistent literature on this topic, therefore the references are intended to be illustrative, and not exhaustive. In \cite{MShef08} the authors study mild solutions (in a sense specified in the paper) for space-time pseudo-differential equations. In \cite{Toaldo2015} the time-changed $C_0$-semigroup has been investigated. Similarly, in \cite{Chen2017} the author proves existence and uniqueness of strong solutions to general time fractional equations with initial data $f \in D(A)$. In \cite{CKKW} the authors establish existence and uniqueness for weak solutions and initial data $f \in L^2$.

\subsection{Elliptic problems}
We proceed with a short discussion of results which will be useful below. The solution to
\begin{align*}
-Gu = f \quad on\ E
\end{align*}
is given by
\begin{align*}
u(x) = \mathbf{E}_x\left[\int_0^{\zeta} f(X_s)ds \right]
\end{align*}
where $\zeta$ is the lifetime of $X$. We may write
\begin{align*}
u(x) = \int_0^\infty Q_t f(x) dt
\end{align*}
which plays an interesting role in the discussion of elliptic problems for time-changed processes (Proposition \ref{PropYoung1} and Proposition \ref{PropYoung2}). The reader should have in mind formula \eqref{YoungSymb}. If $f=\mathbf{1}_E$, then
\begin{align*}
Q_t \mathbf{1}_E(x) = \mathbf{P}_x(X_t \in E) = \mathbf{P}_x(t < \tau_E)
\end{align*}
where $\tau_E :=\inf\{s>0\,:\, X_s \notin E\}$ is the first exit time of $X$ from $E$ and
\begin{align*}
\int_0^\infty \mathbf{P}_x(t < \tau_E) dt = \mathbf{E}_x[\tau_E]
\end{align*}
is the mean exit time of $X$ from $E$, that is the solution to
\begin{align*}
-G u = \mathbf{1}_E.
\end{align*}
We move to the non-local problems.\\

Concerning the process $X^H_t$, the solution to
\begin{align*}
\Phi(-G) u = \mathbf{1}_E
\end{align*}
is given by
\begin{align*}
u(x) = \mathbf{E}_x[\tau_E^H]
\end{align*}
where $\tau_E^H :=\inf\{s>0\,:\, X^H_s \notin E\}$ is the first exit time of $X^H$ from $E$. \\

Concerning the process $X^L_t$, the solution to
\begin{align*}
-Gu= \Phi^\prime(0) \mathbf{1}_E
\end{align*}
is given by
\begin{align*}
u(x) = \mathbf{E}_x[\tau_E^L]
\end{align*}
where $\tau_E^L :=\inf\{s>0\,:\, X^L_s \notin E\}$ is the first exit time of $X^L$ from $E$. Moreover,
\begin{align*}
\mathbf{E}_x[\tau_E^L] = \Phi^\prime(0)\, \mathbf{E}_x[\zeta]
\end{align*}
where $\zeta$ is the lifetime of the base process $X_t$. If we have Dirichlet boundary condition for example, then $\zeta = \tau_E$. We recall that 
\begin{align*}
\Phi^\prime(0) = \lim_{\lambda \to 0} \frac{\Phi(\lambda)}{\lambda}
\end{align*}
is finite only in some cases and determines the delay of the base process in a sense to be better specified below. We discuss in detail this relation in the next section. Here we only anticipate the following cases as introductory examples. If we have $\Phi(\lambda)=\lambda^\alpha$, then the inverse $L_t$ to a stable subordinator slows down $X_t$ and the composition $X^L_t$ turns out to be delayed. Indeed, we obtain that $\mathbf{E}_x[\zeta^L]=\infty$. On the other hand, for $a>0$, $b>0$ 
\begin{align*}
\Phi(\lambda) = a\ln (1+\lambda/b) = \int_0^\infty (1- e^{-\lambda y}) a e^{-b y} \frac{dy}{y}
\end{align*}
is the symbol of the gamma subordinator $H$ with density 
$$\mathbf{P}_0(H_t \in dx) =  \frac{b^{at}}{\Gamma(at)} x^{at - 1} e^{-b x} dx$$ 
and 
\begin{align*}
\mathbf{E}_0[e^{-\lambda H_t}] = \int_0^\infty e^{-\lambda x} \mathbf{P}_0(H_t \in dx) = \left( 1 + \lambda/b \right)^{-at} = e^{- t a \ln (1+ \lambda /b)}.
\end{align*}
We have that
\begin{align*}
\Phi^\prime(0) = \frac{b}{a} < \infty
\end{align*}
and the mean value of the lifetime $\zeta^L$ is finite. The time-changed process $X^H_t$ is known as variance gamma process or Laplace motion (see \cite{ColDov1} and the references therein). It is a L\'evy process with no diffusion component (a pure jump process), the increments are independent and follow a variance-gamma distribution, which is a generalization of the Laplace distribution (modified Bessel function $K$).

\subsection{Delayed and rushed processes}

We start with the following definition given in \cite{BCM} for the Brownian motion.
\begin{definition}
\label{def1}
Let $E \subset \mathbb{R}^d$ be an open connected set with finite volume. Let $B \subset E$ be a closed ball with non-zero radius. Let $X$ be a reflected Brownian motion on $\overline{E}$ and denote by $T_B= \inf\{t\geq 0\,:\, X_t \in B\}$ the first hitting time of $B$ by $X$. We say that $E$ is a \emph{trap domain} for $X$ if 
\begin{align*}
\sup_{x \in E} \mathbf{E}_x[T_B] = \infty.
\end{align*}
Otherwise, we say that $E$ is a \emph{non-trap domain} for $X$.
\end{definition}
In Definition \ref{def1}, the random time $T_B$ plays the role of lifetime for the Brownian motion on $\bar{E} \setminus \bar{B}$ reflected on $\partial E \setminus \partial B$ and killed on $\partial B$.

Further on we denote by $\zeta$ (possibly with some superscript) the lifetime of a process $X$, that is for the process $X_t$ in $E$ with $X_0=x \in E$ (denote by $E^c$ the complement set of $E$),
\begin{align*}
\zeta := \inf\{t >0\,:\, X_t \in E^c\}.
\end{align*}
Let $T$ be a random time and denote by $X^T := X \circ T$ the process $X$ time-changed by $T$. It is well-known that $X^T$ is Markovian only for a Markovian time change $T$, otherwise from the Markov process $X$ we obtain a non-Markov process $X^T$. Denote by $\zeta^T$ the lifetime of $X^T$. We consider the following characterization in terms of lifetimes given in \cite{CapDovDelRush}. 
\begin{definition}
\label{defDelRus}
Let $E \subset \mathbb{R}^d$. 
\begin{itemize}
\item[-] We say that $X$ is delayed by $T$ if $\mathbf{E}_x[\zeta^T] > \mathbf{E}_x[\zeta]$, $\forall \, x \in E$.
\item[-] We say that $X$ is rushed by $T$ if $\mathbf{E}_x[\zeta^T] < \mathbf{E}_x[\zeta]$, $\forall \, x \in E$.
\end{itemize}
Otherwise, we say that $X$ runs with its velocity.
\end{definition}

\begin{remark}
Let $X$ be a Brownian motion. If $X$ is killed on $\partial E$ we notice that $\mathbf{E}_x[\zeta] < \infty$. We underline the fact that if $X$ is reflected on $\partial E \setminus \partial B$ and killed on $\partial B$ with $B \subset E$, we have that $\sup_x \mathbf{E}_x[\zeta] < \infty$ only if $E$ is non-trap for $X$. Examples of non-trap domains are given by smooth domains and snowflakes domains as the  scale irregular (Koch) fractals in figures \ref{fig1} and \ref{fig2} (see \cite{CAP} for details). Figures \ref{fig1} and \ref{fig2} are realizations of the random domain obtained by choosing randomly the contraction factor step by step in the construction of the pre-fractals.
\end{remark}

\begin{figure}
\includegraphics[scale=1]{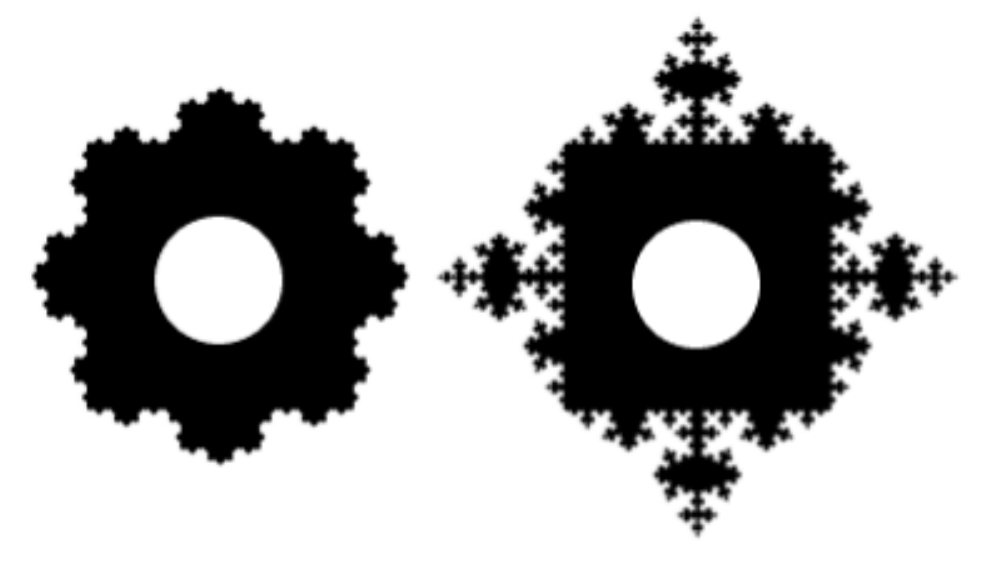} 
\caption{Koch curves outside the square. We have Neumann condition 
on $\partial E \setminus \partial B$ and Dirichlet condition on $\partial B$ where $B$ is the Ball inside $E$. The domains are non-trap for the Brownian motion. 
%For trap domains the analysis in terms of exit time does not apply.
}
\label{fig1}
\end{figure}

\begin{figure}
\includegraphics[scale=1]{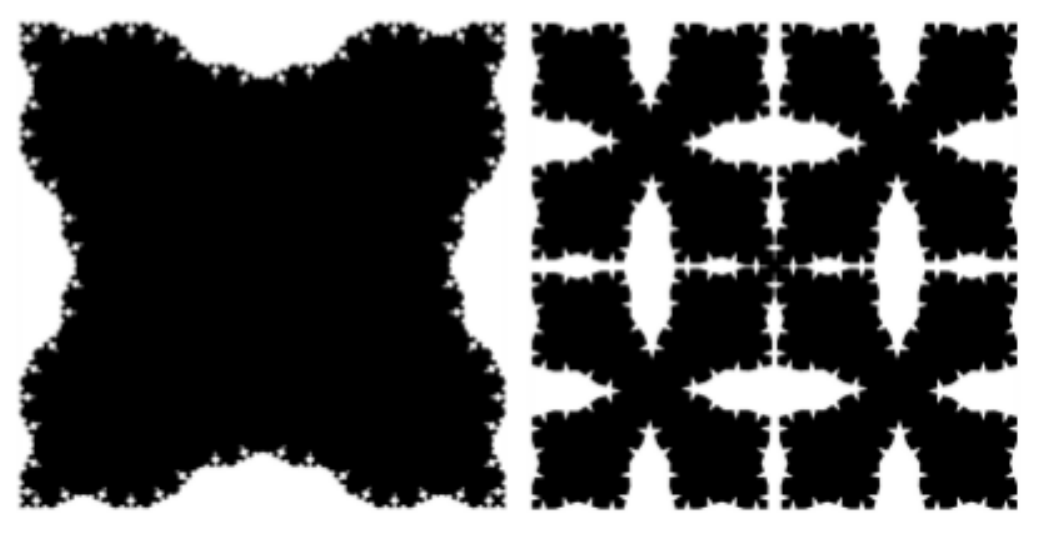} 
\caption{Koch curves inside the square. We have Dirichlet condition on the boundary $\partial E$. The domains are non-trap for the Brownian motion. 
%For trap domains the analysis in terms of exit time still makes sense.
}
\label{fig2}
\end{figure}

\begin{theorem}
\label{meanLzeta}
(\cite{CapDovDelRush}) Let $\Phi$ be the Bernstein function in \eqref{LevKinFormula}. Let $H$ be the subordinator with symbol $\Phi$. We have
\begin{align}
\mathbf{E}_x[\zeta^L] = \mathbf{E}_x[H_\zeta], \quad x \in E.
\end{align}
\end{theorem}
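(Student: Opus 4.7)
The plan is to exploit the pathwise relationship between $H$ and its inverse $L$, together with the independence of the subordinator from the base process $X$, to identify $\zeta^L$ with $H_\zeta$ almost surely, after which the expectation identity is immediate.

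First I would unpack the definition of the lifetime of the time-changed process. Since $X^L_t = X_{L_t}$, the lifetime is
\[
\zeta^L = \inf\{t > 0 : X^L_t \in E^c\} = \inf\{t > 0 : X_{L_t} \in E^c\} = \inf\{t > 0 : L_t \geq \zeta\},
\]
the last equality using that $L_t$ is continuous and non-decreasing, so $X_{L_t}$ exits $E$ precisely when the clock $L_t$ first reaches the exit time $\zeta$ of $X$. Here I am implicitly using that $H$ (and hence $L$) is constructed independently of $X$, so conditional on the path of $X$, the value $\zeta$ is a fixed level that $L_\cdot$ eventually crosses.

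Next I would invert the relation $L_t \geq \zeta$ using the fact that $H$ is strictly increasing (the standing assumption of the paper, since we exclude step subordinators with $\phi((0,\infty)) < \infty$). By definition $L_t = \inf\{s \geq 0 : H_s \geq t\}$, so the duality
\[
L_t \geq \zeta \quad \Longleftrightarrow \quad H_\zeta \leq t
\]
holds; this is exactly the identity \eqref{relationHL} at the level of paths. Taking the infimum over $t$ satisfying the right-hand side gives $\zeta^L = H_\zeta$ almost surely.

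Taking $\mathbf{E}_x$ of both sides then yields $\mathbf{E}_x[\zeta^L] = \mathbf{E}_x[H_\zeta]$. The only delicate point, and the main technical obstacle to watch for, is the endpoint behavior of $L$ at the level $\zeta$: one must check that $L$ actually \emph{attains} the value $\zeta$ (rather than jumping over it) so that the infimum defining $\zeta^L$ is achieved, and that no positive-measure flat stretch of $L$ at level $\zeta$ inflates $\zeta^L$ beyond $H_\zeta$. Both issues are resolved by the standing hypotheses: $L$ is continuous (so it attains every intermediate level), and the flat stretches of $L$ correspond bijectively to the jumps of $H$, so $\inf\{t : L_t \geq \zeta\}$ is exactly $H_\zeta$ — the left endpoint of the flat piece at height $\zeta$, matching the right-continuity of $H$. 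With this checked, a short application of Fubini (or direct expectation) concludes.
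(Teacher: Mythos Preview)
The paper does not supply its own proof of this theorem: it simply states the result and attributes it to \cite{CapDovDelRush}. There is therefore no in-paper argument to compare your proposal against.

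Your argument is essentially correct, but one detail deserves tightening. You claim that the left endpoint of the flat piece of $L$ at height $\zeta$ is $H_\zeta$, ``matching the right-continuity of $H$''. In fact that left endpoint is $H_{\zeta-}$: for $t<H_{\zeta-}$ one has $L_t<\zeta$, while for $t\geq H_{\zeta-}$ one has $L_t\geq\zeta$, so $\zeta^L=\inf\{t:L_t\geq\zeta\}=H_{\zeta-}$. What rescues the identity is not right-continuity of $H$ but rather stochastic continuity together with the independence of $H$ and $X$: conditioning on $\zeta$, the fixed time $\zeta$ is almost surely not a jump time of $H$, hence $H_{\zeta-}=H_\zeta$ almost surely. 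With this correction your pathwise identification $\zeta^L=H_\zeta$ a.s.\ goes through, and taking expectations finishes the proof.
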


\begin{theorem}
\label{meanHzeta}
(\cite{CapDovDelRush}) Let $\Phi$ be the Bernstein function \eqref{LevKinFormula}. Let $L$ be the inverse to a subordinator $H$ with symbol $\Phi$.  We have that 
\begin{align}
\mathbf{E}_x[\zeta^H] = \mathbf{E}_x[L_\zeta], \quad x \in E.
\end{align}
\end{theorem}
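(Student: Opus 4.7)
The plan is to establish the pathwise identity $\zeta^H = L_\zeta$ almost surely and then take $\mathbf{E}_x$ of both sides. This mirrors the strategy behind Theorem \ref{meanLzeta}, but now exploits the strict monotonicity of the subordinator in place of the continuity of its inverse.

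First I would condition on the trajectory of $X$ (legitimate by the independence of $H$ and $X$), so that $\zeta(\omega)$ may be regarded as a fixed positive number. By the definition $\zeta=\inf\{t>0:X_t\in E^c\}$ together with the fact that $\zeta$ is a genuine first-exit time of a Feller (in particular c\`adl\`ag) process, one has $X_s\in E$ for $s<\zeta$ and $X_s\in E^c$ for $s\geq\zeta$. Since $H$ is a strictly increasing subordinator with infinite L\'evy measure and $H_0=0$ (as assumed throughout Section \ref{Sec:GenNLops}), the composition $X^H_t=X_{H_t}$ therefore lies in $E$ precisely when $H_t<\zeta$. Consequently
\begin{equation*}
\zeta^H=\inf\{t>0:X_{H_t}\in E^c\}=\inf\{t\geq 0:H_t\geq \zeta\},
\end{equation*}
and the right-hand side is, by the very definition of the inverse $L$, equal to $L_\zeta$. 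Taking $\mathbf{P}_x$-expectations of both sides yields the identity $\mathbf{E}_x[\zeta^H]=\mathbf{E}_x[L_\zeta]$ claimed in the statement.

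The main point to be careful with is the possibility that $H$ has a jump that overshoots the level $\zeta$, i.e. $H_{L_\zeta-}<\zeta<H_{L_\zeta}$. I would argue as follows: for any $t<L_\zeta$ we have $H_t<\zeta$ by the definition of $L_\zeta$, hence $X_{H_t}\in E$ and thus $\zeta^H\geq L_\zeta$; conversely, $H_{L_\zeta}\geq\zeta$ forces $X_{H_{L_\zeta}}\in E^c$ by the choice of $\zeta$ as the first-passage time into $E^c$, giving $\zeta^H\leq L_\zeta$. The (formal) situation where $\mathbf{E}_x[\zeta]=+\infty$ is handled by the monotone convergence, since both sides are then infinite (on $\{\zeta=\infty\}$ we have $L_\zeta=\infty$ because $H_s<\infty$ for every $s$). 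I expect that the only delicate step, besides a careful application of the two inequalities above, is justifying the interchange of expectation and the inverse-map relation $\zeta^H=L_\zeta$ — and this is exactly what is made rigorous by the independence of $H$ from $X$ together with $H$ being strictly increasing with infinite L\'evy measure.
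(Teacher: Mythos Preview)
The paper does not supply a proof of Theorem~\ref{meanHzeta}; it merely quotes the result from \cite{CapDovDelRush}. So there is nothing in the present paper to compare your argument against, and the pathwise identity $\zeta^H=L_\zeta$ followed by taking expectations is indeed the natural route (and is what one would expect to find in the cited source).

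One point in your write-up deserves tightening. You claim that ``$X_s\in E^c$ for $s\geq\zeta$'' follows from the c\`adl\`ag property of a Feller process. That is not true in general: a process may exit $E$ at time $\zeta$ and re-enter $E$ immediately afterwards. What makes the statement correct in the paper's framework is that $X$ is explicitly the \emph{part process} of ${}^*X$ on $E$ (see the discussion around \eqref{semigPhi-L-continuity}), i.e.\ $X$ is sent to the cemetery point $\partial\notin E$ at time $\zeta$ and stays there. With this reading, $X_{H_t}\in E$ iff $H_t<\zeta$, and your two inequalities $\zeta^H\geq L_\zeta$ and $\zeta^H\leq L_\zeta$ go through exactly as you wrote them. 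The overshoot discussion and the $\zeta=\infty$ case are handled correctly.
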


The previous results must be read by considering that
\begin{equation}
\mathbf{E}_x[H_\zeta] = \lim_{\lambda \downarrow 0} \frac{\Phi(\lambda)}{\lambda} \mathbf{E}_x[\zeta]
\end{equation}
whereas $\mathbf{E}_0[L_t]$ is known only on some cases (see for example \cite{ColDov1}).

The time change may lead to unexpected situation. The gamma subordinator represents a nice example in this direction. Let $H$ be characterized by $\Phi(\lambda) = a \ln (1+\lambda/b)$ with $0< a,b < \infty$. Then, $\Phi(\lambda)/\lambda \to b/a$ for $\lambda \downarrow 0$ as mentioned above. This means that, for $b<a$ for example, the base process $X$ is rushed by $L$ and the process $X^L$ can not be considered as a delayed process. We have a delaying effect for $b>a$. The process $X^L$ is actually a delayed process in case $L$ is an inverse to a stable subordinator (with $\Phi(\lambda)=\lambda^\alpha$). Indeed, the lifetime $\zeta^L$ turns out to be finite with some probability but its mean value is infinite. The Definition \ref{defDelRus} must be therefore understood by considering all possible paths of a time-changed process. This means that we have \lq\lq delayed in mean\rq\rq\ or \lq\lq rushed in mean\rq\rq processes.

\section{An irregular domain}

Our aim is to underline some relation between non-local operators and the \lq\lq regularity\rq\rq\ of the domains. Such a connection is given in terms of stochastic processes describing the motions in that domains. In order to be clear we present here a discussion based on a simple and instructive case, the fractal Koch domain.

\subsection{Random Koch domains (RKD)}
\label{section:RKD}
Let $\ell_{a} \in (2,4)$ with $a \in I \subset \mathbb{N}$ be the reciprocal of the contraction factor for the family $\Psi^{(a)}$ of contractive similitudes $\psi^{(a)}_i : \mathbb{C} \to \mathbb{C}$ given by
\begin{align*}
\psi^{(a)}_1(z)=\frac{z}{\ell_a}, \quad \psi^{(a)}_2(z)= \frac{z}{\ell_a} e^{\imath \theta(\ell_a)} + \frac{1}{\ell_a},
\end{align*}
\begin{align*}
\psi^{(a)}_3(z) = \frac{z}{\ell_a} e^{\imath \theta(\ell_a)} + \frac{1}{2} + \imath \sqrt{\frac{1}{\ell_a} - \frac{1}{4}}, \quad \psi^{(a)}_4(z) = \frac{z-1}{\ell_a} + 1
\end{align*}
where $\theta(\ell_a) = \arcsin(\sqrt{\ell_a(4-\ell_a)}/2)$. Let $\Xi =I^\mathbb{N}$ with $I \subset \mathbb{N}$, $|I|=N$, and let $\xi=(\xi_1, \xi_2, \ldots) \in \Xi$.  We call $\xi$ an environment sequence where $\xi_n$ says which family of contractive similitudes we are using at level $n$. Set $\ell^{(\xi)}(0)=1$ and 
\begin{align*}
\ell^{(\xi)}(n)= \prod_{i=1}^n \ell_{\xi_i}.
\end{align*}
We define a left shift $S$ on $\Xi$ such that if $\xi = \left( \xi_1, \xi_2,\xi_3, \ldots \right),$ then $S\xi = \left(  \xi_2,\xi_3, \ldots \right).$ For $B\subset \mathbb{R}^2$ set $\Upsilon^{(a)}(B) = \bigcup_{i =1}^{4} \psi^{( a)}_i \left( B \right)$ and $\Upsilon^{(\xi)}_n(B) =\Upsilon^{(\xi_1)}\circ\dots\circ\Upsilon^{(\xi_n)} \left( B \right)$. The fractal $K^{( \xi )}$ associated with the environment sequence $\xi$ is defined by
$$K^{( \xi )}=\overline{\bigcup_{n = 1}^{+ \infty}\Upsilon_n^{(\xi)}(\Gamma)}$$
where $\Gamma=\{ P_1, P_2\}$ with $P_1=(0,0)$ and $P_2=(1,0).$  For $\xi\in \Xi,$ we define the word space $W=W^{(\xi)}=\{ (w_1,w_2,...): 1\leqslant w_i\leqslant 4\}$ and, for $w\in W,$ we set $w|n=(w_1,...,w_n)$ and $\psi_{w | n}=\psi^{(\xi_1)}_{w_1}\circ\dots\circ\psi^{(\xi_n)}_{w_n}.$ The volume measure  $\mu^{(\xi)} $ is  the unique Radon measure on $K^{(\xi)}$ such that 
\begin{equation*}
\mu^{(\xi)}(\psi_{w | n}(K^{(S^n\xi)}))=\frac{1}{4^n}
\end{equation*} 
for all $w\in W,$ as,  for each $a\in A,$ the family $\Psi^{(a)} $ has $4$  contractive similitudes. Let $K_0$ be the line segment of unit length with $P_1=(0,0)$ and $P_2=(1,0)$  as endpoints. We set, for each $n \in \mathbb{N}$, 
\begin{equation*}
K^{(\xi)}_n=\Upsilon_n^{(\xi)}(K_0)
\end{equation*}
and ${K^{(\xi)}_n}$ is the so-called $n$-th \text{prefractal (deterministic) curve}.

Let us consider the random vector $\boldsymbol{\xi}= (\boldsymbol{\xi}_1, \boldsymbol{\xi}_2, \ldots)$ whose components $\boldsymbol{\xi}_i$ take values on $I$ with probability mass function ${\bf P} : \Xi \to [0,1]$. Thus, the construction of the random $n$-th pre-fractal  curve $K^{(\boldsymbol{\xi})}_n=\Upsilon_n^{(\boldsymbol{\xi})}(K_0) $ depends on the realization of $\boldsymbol{\xi}$ with probability $P(\boldsymbol{\xi}_i=\xi_i)$ for its $i$-th  component. We assume that $\{\boldsymbol{\xi}_i\}_{i=1, \ldots, n}$ are identically distributed and $\boldsymbol{\xi}_i \perp \boldsymbol{\xi}_j$ for $i\neq j$, that is we obtain the curve $K_n^{(\xi)}$ with probability
\begin{align*}
{\bf P}(\boldsymbol{\xi}| n =\xi | n) = \prod_{i=1}^n {\bf P}(\boldsymbol{\xi}_i=\xi_i)
\end{align*}
where $\boldsymbol{\xi}| n = (\boldsymbol{\xi}_1, \ldots, \boldsymbol{\xi}_n)$ and $\xi |n = (\xi_1, \ldots, \xi_n)$. Further on we only use the superscript $(\xi |n)$ or $(\boldsymbol{\xi}|n)$ in order to streamline the notation. \\

Given the random environment sequence $\boldsymbol{\xi}$, the random fractal $K^{(\boldsymbol{\xi})}$  is therefore defined by the deterministic fractal $K^{(\xi)}$.\\

Let $\Omega^{(\xi |n)}$ be the planar domain obtained from a regular polygon by replacing each side with a pre-fractal curve $K_n^{(\xi)}$ and $\Omega^{(\xi)}$ be the planar domain obtained by replacing each side with the corresponding fractal curve $K^{(\xi)}$. We introduce the random planar domains $\Omega^{(\boldsymbol{\xi}|n)}$ and $\Omega^{(\boldsymbol{\xi})}$ by considering the random curves $K_n^{(\boldsymbol{\xi})}$ and $K^{(\boldsymbol{\xi})}$. Examples of (pre-fractal) random Koch domains are given in Figure \ref{fig-outside} (outward curves), Figure \ref{fig-con} (inward curves) and Figure \ref{fig-con-2} (inward curves) by choosing the square as regular polygon. 

\begin{figure}
\centering
\includegraphics[scale=.6]{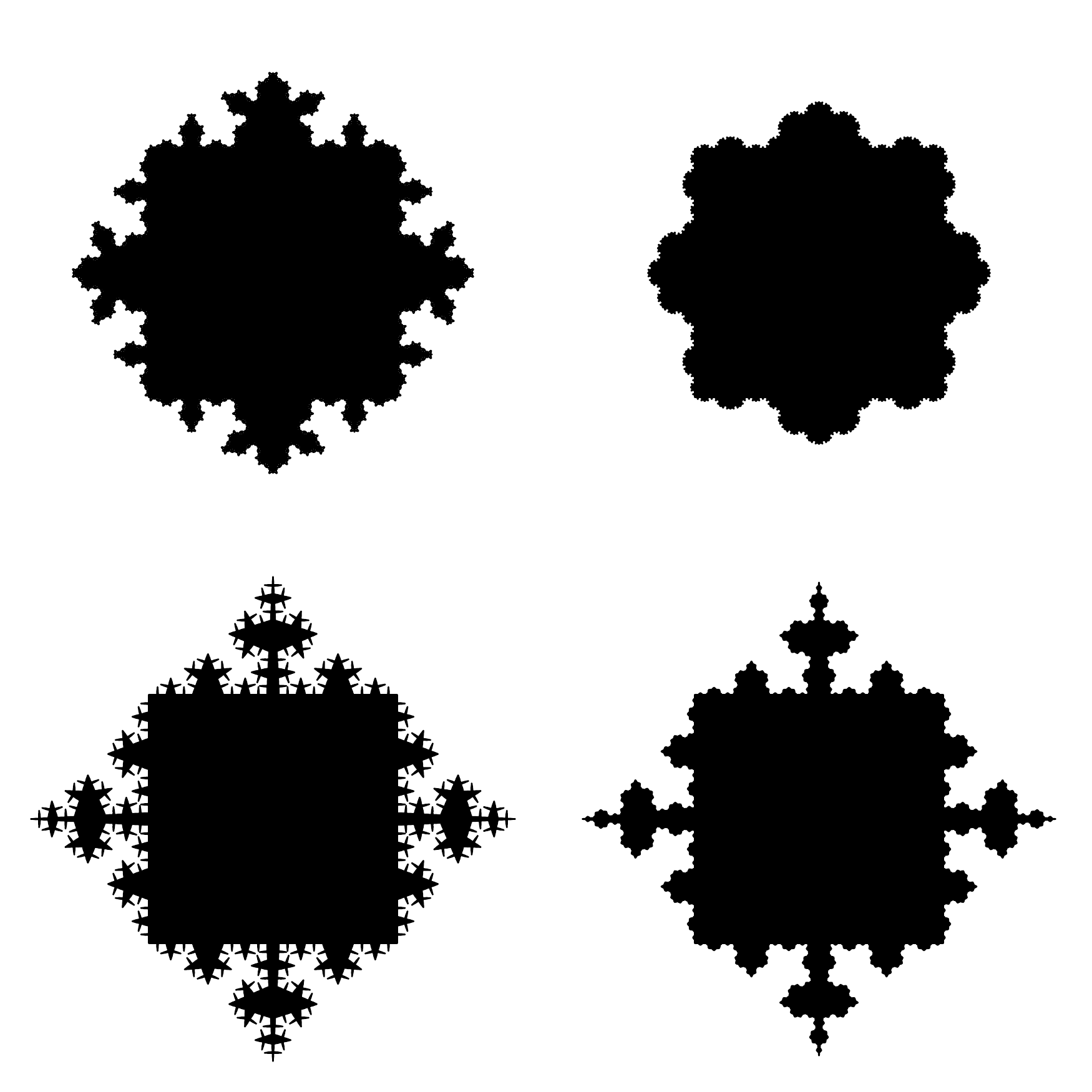}
\caption{Outward curves}
\label{fig-outside}
\end{figure}

\begin{figure}
\centering
\includegraphics[scale=1]{KochPic-con.pdf}
\caption{Inward curves}
\label{fig-con}
\end{figure}

\begin{figure}
\centering
\includegraphics[scale=1]{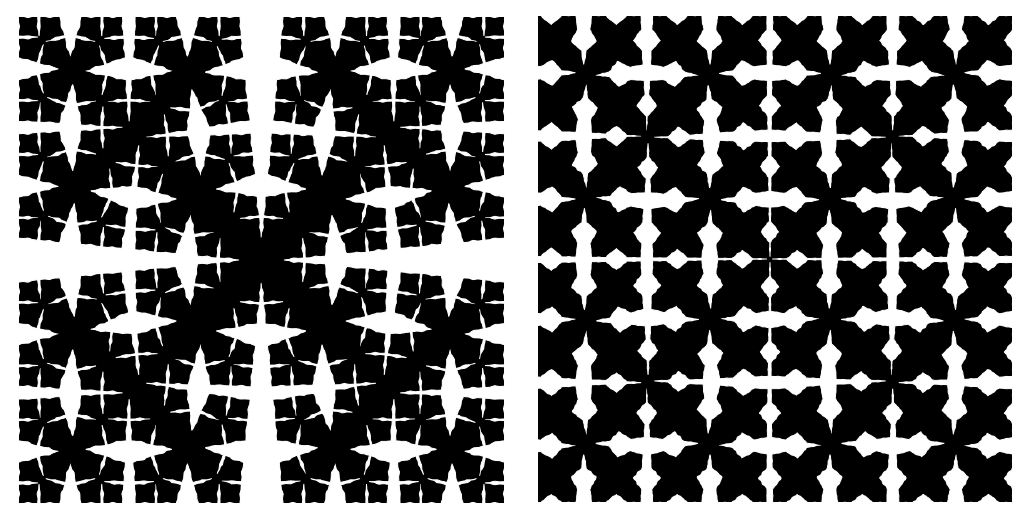}
\caption{Inward curves}
\label{fig-con-2}
\end{figure}

%\begin{figure}
%\caption{inside, not connected}
%\centering
%\includegraphics[scale=.5]{KochPic-nc.pdf}
%\label{fig-nc}
%\end{figure}
%
%\begin{figure}
%\caption{inside, not connected}
%\centering
%\includegraphics[scale=1]{KochPic-nc-2.pdf} 
%\label{fig-nc-2}
%\end{figure}

\begin{theorem}
For the sequence of (random) Hausdorff dimension $d^{(\boldsymbol{\xi}|n)}$ we have that
\begin{align*}
d^{(\boldsymbol{\xi}|n)} \stackrel{a.s.}{\to} d^{(\boldsymbol{\xi})} = \frac{\ln 4}{\mathbf{E}[\ln \ell_{\boldsymbol{\xi}_1}]}, \quad n \to \infty.
\end{align*}
\end{theorem}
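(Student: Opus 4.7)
The plan is to compute $d^{(\boldsymbol{\xi}|n)}$ explicitly from the self-similar structure of the $n$-th pre-fractal and then apply the strong law of large numbers. The key observation is that $K_n^{(\boldsymbol{\xi})} = \Upsilon_n^{(\boldsymbol{\xi})}(K_0)$ is the union of $4^n$ similar copies of the segment $K_0$, each scaled by the factor $1/\ell^{(\boldsymbol{\xi})}(n)=\prod_{i=1}^n 1/\ell_{\boldsymbol{\xi}_i}$. Moreover, at the pre-fractal level the pieces satisfy the open set condition (as is standard for Koch-type constructions), so Moran's formula applies and the similarity dimension equals the Hausdorff dimension.

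Concretely, first I would verify that the $4^n$ maps $\psi_{w|n}$, $w \in W$, realising $K_n^{(\boldsymbol{\xi})}$ are contractive similitudes with common ratio $1/\ell^{(\boldsymbol{\xi})}(n)$ satisfying the open set condition (this can be checked inductively using that each $\Psi^{(a)}$ satisfies it since $\ell_a > 2$). Then Moran's equation
\begin{equation*}
\sum_{w \in W} \left(\frac{1}{\ell^{(\boldsymbol{\xi})}(n)}\right)^{d^{(\boldsymbol{\xi}|n)}} = 1
\end{equation*}
yields $4^n = (\ell^{(\boldsymbol{\xi})}(n))^{d^{(\boldsymbol{\xi}|n)}}$ and hence
\begin{equation*}
d^{(\boldsymbol{\xi}|n)} = \frac{n \ln 4}{\ln \ell^{(\boldsymbol{\xi})}(n)} = \frac{\ln 4}{\frac{1}{n}\sum_{i=1}^n \ln \ell_{\boldsymbol{\xi}_i}}.
\end{equation*}

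Second, since the random variables $\{\boldsymbol{\xi}_i\}_{i\geq 1}$ are i.i.d.\ and $\ell_{\boldsymbol{\xi}_i}$ takes values in the bounded interval $(2,4)$, the variables $\ln \ell_{\boldsymbol{\xi}_i}$ are bounded (hence integrable) with common mean $\mathbf{E}[\ln \ell_{\boldsymbol{\xi}_1}] \in (\ln 2,\ln 4)$. Kolmogorov's strong law of large numbers gives
\begin{equation*}
\frac{1}{n}\sum_{i=1}^n \ln \ell_{\boldsymbol{\xi}_i} \xrightarrow{a.s.} \mathbf{E}[\ln \ell_{\boldsymbol{\xi}_1}], \qquad n \to \infty.
\end{equation*}

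Finally, since the mapping $x \mapsto \ln 4 / x$ is continuous at $x = \mathbf{E}[\ln \ell_{\boldsymbol{\xi}_1}] > 0$, the continuous mapping theorem (applied pathwise on the set of full probability where the SLLN holds) yields
\begin{equation*}
d^{(\boldsymbol{\xi}|n)} = \frac{\ln 4}{\frac{1}{n}\sum_{i=1}^n \ln \ell_{\boldsymbol{\xi}_i}} \xrightarrow{a.s.} \frac{\ln 4}{\mathbf{E}[\ln \ell_{\boldsymbol{\xi}_1}]},
\end{equation*}
which is the desired limit. The only delicate point is the verification of the open set condition uniformly in the random environment, so that Moran's similarity dimension coincides with the Hausdorff dimension at each pre-fractal level; this is standard for Koch-type families with $\ell_a > 2$ and can be imported from the deterministic theory applied pathwise.
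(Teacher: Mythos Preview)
Your argument is correct and follows the same route as the paper: write $d^{(\boldsymbol{\xi}|n)} = \ln 4 \big/ \bigl(\tfrac{1}{n}\sum_{i=1}^n \ln \ell_{\boldsymbol{\xi}_i}\bigr)$ and apply the strong law of large numbers (you add a Moran-formula justification for this expression, which the paper simply writes down). The one ingredient the paper supplies that you do not is the identification of the limit value $\ln 4 / \mathbf{E}[\ln \ell_{\boldsymbol{\xi}_1}]$ with the Hausdorff dimension $d^{(\boldsymbol{\xi})}$ of the \emph{limiting} random fractal $K^{(\boldsymbol{\xi})}$, which it imports from Barlow--Hambly; you establish convergence to that numerical value but would likewise need to cite that result to close the statement as phrased.
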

\begin{proof}
Since $\boldsymbol{\xi}_i \stackrel{law}{=} \boldsymbol{\xi}_1$, $\forall\, i$, we have that the Hausdorff dimension $d^{(\boldsymbol{\xi})}$ of the curve  $K^{(\boldsymbol{\xi})}$ can be obtained by considering the strong law of large numbers and the fact that
\begin{align*}
\frac{\ln 4^n}{\sum_{i=1}^n \ell_{\boldsymbol{\xi}_i}}  = \frac{\ln 4}{\frac{1}{n} \sum_{i=1}^n \ell_{\boldsymbol{\xi}_i}} \stackrel{a.s.}{\to} \frac{\ln 4}{\mathbf{E}[\ln \ell_{\boldsymbol{\xi}_1}]}, \quad n \to \infty.
\end{align*}
Then (see \cite[Lemma 2.3]{BH}),
\begin{align}
d^{(\boldsymbol{\xi})} = \frac{\ln 4}{\ln \prod_{a \in I} (\ell_{a})^{P(\boldsymbol{\xi}_1 =a)}} = \frac{\ln 4}{\mathbf{E}[ \ln \ell_{\boldsymbol{\xi}_1}]}. \label{dimHK}
\end{align}
\end{proof}
The measure $\mu^{(\xi)}$ satisfies the following property. There exist two positive constants $C_1, C_2,$ such that,
\begin{equation}
\label{eq:6} 
C_1 r^{d^{(\xi)} }\le \mu^{{(\xi)} }(\mathcal{B}(P,r)\cap K^{(\xi)})\le C_2 r^{d^{(\xi)} }\ ,\quad
\forall\,P\in K^{(\xi)}, 
\end{equation} 
where $\mathcal{B}(P,r)$ denotes the Euclidean ball with center in $P$ and radius $0<r\leq1$ (see  \cite{BH}).  According to Jonsson and Wallin (\cite{JonWal}), we say that $K^{(\xi)} $ is a $d$-set with respect to  the Hausdorff measure $\mathcal{H}^d,$ with $d=d^{(\xi)}.$ The sequence
\begin{align}
\sigma^{(\xi | n)} = \frac{\ell^{(\xi | n)}}{4^n}, \qquad \textrm{where} \qquad \ell^{(\xi | n)} = \prod_{i=1}^n \ell_{\xi_i} \label{seq-sig}
\end{align}
is obtained from the realization of $\boldsymbol{\xi}|n$ and therefore, from the realization of the random variable $\ell^{(\boldsymbol{\xi}|n)}$ with mean value given by
\begin{align*}
\mathbf{E}[\ell^{(\boldsymbol{\xi} | n)}] = \prod_{i=1}^n \mathbf{E} [\ell_{\boldsymbol{\xi}_i}] = \left( \mathbf{E}[ \ell_{\boldsymbol{\xi}_1}] \right)^n.
\end{align*}
For $\alpha=\mathbf{E} \ell_{\boldsymbol{\xi}_1} \in (2,4)$ we find the mean value $\mathbf{E}[\sigma^{(\boldsymbol{\xi} | n)}] = \alpha^n / 4^n$ from \eqref{seq-sig}. In terms of that sequence we are able to deal with the pre-fractal (and fractal) boundary by introducing for example the Revuz measure associated with some boundary functional. In particular, we take into account the integral
\begin{align*}
\sigma^{(\xi |n)} \int_{\partial \Omega^{(\xi |n)}} f(x) \mathbf{1}_{\Lambda}(x) d\mathfrak{s}, \quad \Lambda \in \partial \Omega^{(\xi |n)}, \quad n \in \mathbb{N}
\end{align*} 
where $\mathfrak{s}$ is the arc-length measure on the pre-fractal boundary leading to the Hausdorff measure $\mu^{(\xi)}$ on the fractal boundary. As $n\to \infty$, $\sigma^{(\xi |n)}$ ensures convergence.

The random contraction factors give rise to random (pre-fractal) Koch domains $\Omega^{(\boldsymbol{\xi}|n)}$. The random (fractal) Koch domains $\Omega^{(\boldsymbol{\xi})}$ are obtained as described above. In order to have a clear picture about the relevance of the Koch domain in our analysis, we recall an example introduced in \cite{BCM}. Assume that the construction of a Koch curve $K^{(\xi)}$ is given by the picture in Figure \ref{Fig:KochModified}. The Koch domain becomes a trap domain for the Brownian motion according with the asymptotic behavior of the opening size.

\begin{figure}
\includegraphics[scale=.5]{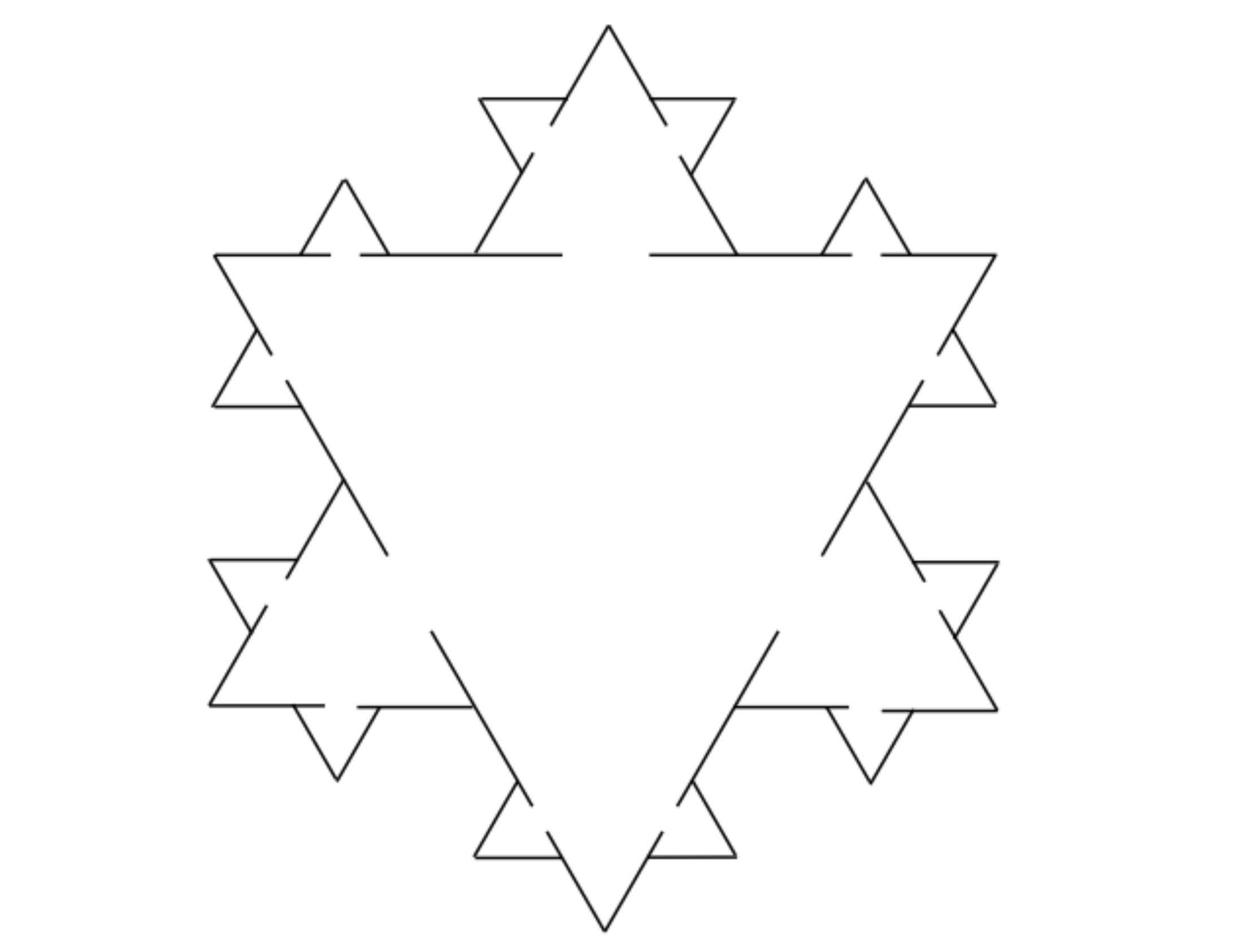} 
\caption{The modified Koch domain (pre-fractal, second step) introduced in \cite{BCM}. The passage between triangles is blocked by a wall with a small opening. The construction can be considered for random Koch domains. The time the process spends in a triangle strongly depends on the size of the opening. The asymptotic behavior of that size implies the construction of trap or non-trap Koch domains.}
\label{Fig:KochModified}
\end{figure}

\subsection{Non-local initial value problem on the RKD $\Omega^{(\boldsymbol{\xi})}$}

In \cite{CapDovJEE} we consider the sequence of time-changed process $X^{L,n} = X^n \circ L$ where $X^n$ is an elastic Brownian motion on $\Omega^{(\boldsymbol{\xi}|n)}$ with elastic coefficient $c_n$. We study the asymptotic behavior of $X^{L,n}$ depending on the asymptotics for $c_n$. In the deterministic case, the process $X^{L,n}$ can be associated with the non-local Cauchy problem on $\Omega^{(\xi|n)}$,
\begin{align*}
\mathfrak{D}^\Phi_t u = A^n u, \quad u_0=f \in D(A^n).
\end{align*}
Denote by $\mathbb{D}$ the set of continuous functions from $[0, \infty)$ to $E_\partial = \Omega^{(\xi)}\cup \partial$ which are right continuous on $[0, \infty)$ with left limits on $(0, \infty)$. Here, $\partial$ is the cemetery point, that is $E^n_\partial$ is the one-point compactification of $E^n=\Omega^{(\xi | n)}$, $n \in \mathbb{N}$. Denote by $\mathbb{D}_0$ the set of non-decreasing continuous function from $[0, \infty) $ to $[0, \infty)$. Let $X$ be the process on the random fractal $\Omega^{(\boldsymbol{\xi})}$ with generator $(A, D(A))$ where $A$ is the Neumann, the Dirichlet or the Robin Laplacian if $c_n \to c$ respectively with $c=0,\infty$ or a constant. 

\begin{theorem}
(\cite{CapDovJEE}) As $n \to \infty$,
\begin{align*}
X^{L, n} \to X^L \quad \textrm{in distribution in} \quad \mathbb{D} \quad \boldsymbol{\xi}-a.s. \quad \textrm{on} \quad \Omega^{(\boldsymbol{\xi})}.
\end{align*}
\end{theorem}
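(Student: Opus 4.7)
The plan is to decouple the problem into two pieces: first show that the base spatial processes converge, $X^n \to X$ in distribution in $\mathbb{D}$, $\boldsymbol{\xi}$-almost surely; then propagate this through the composition with the independent time change $L$. The composition step is soft once the first is in hand, so the bulk of the work is the convergence of the Feller processes on the (random) pre-fractals.

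For the first step, I would work at the level of Dirichlet forms and use Mosco convergence. For the elastic Brownian motion on $\Omega^{(\boldsymbol{\xi}|n)}$ with coefficient $c_n$, the form reads
\begin{align*}
\mathcal{E}^n(u,u) = \int_{\Omega^{(\boldsymbol{\xi}|n)}} |\nabla u|^2\, dx + c_n\, \sigma^{(\boldsymbol{\xi}|n)} \int_{\partial \Omega^{(\boldsymbol{\xi}|n)}} u^2\, d\mathfrak{s},
\end{align*}
where the scaling $\sigma^{(\boldsymbol{\xi}|n)}$ from \eqref{seq-sig} is precisely what makes the boundary measure converge, up to the Jonsson--Wallin calibration \eqref{eq:6}, to the Hausdorff measure $\mu^{(\boldsymbol{\xi})}$ on $K^{(\boldsymbol{\xi})}$. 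Depending on the regime $c_n \to 0, c \in (0,\infty), \infty$, the Mosco limit is the Neumann, Robin, or Dirichlet form on $\Omega^{(\boldsymbol{\xi})}$, generating the process $X$ in the statement. The $\boldsymbol{\xi}$-a.s. character of the limit comes from the strong law of large numbers applied to $\ell^{(\boldsymbol{\xi}|n)}$, exactly as in \eqref{dimHK}. Mosco convergence gives strong convergence of the resolvents $R^n_\lambda \to R_\lambda$, hence convergence of finite-dimensional distributions of $X^n$ to those of $X$; combined with tightness in $\mathbb{D}$ (via uniform heat-kernel / Green-function estimates on the pre-fractals), one obtains $X^n \to X$ in distribution on $\mathbb{D}$.

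For the second step, view the composition $\Psi : \mathbb{D} \times \mathbb{D}_0 \to \mathbb{D}$, $\Psi(y,\ell) = y \circ \ell$. By a classical argument (Billingsley, Whitt), $\Psi$ is continuous at $(y,\ell)$ whenever the discontinuity set of $y$ does not meet the range of $\ell$. Here $L$ has continuous nondecreasing paths and is independent of all spatial processes, so $(X^n, L) \to (X, L)$ in distribution on $\mathbb{D} \times \mathbb{D}_0$ by the first step, and the limit $X$ has $\mathbf{P}_x$-a.s. continuous paths (since the Mosco limits produce conservative Feller diffusions on $\Omega^{(\boldsymbol{\xi})}$). The continuous mapping theorem then yields
\begin{align*}
X^{L,n} = X^n \circ L \;\longrightarrow\; X \circ L = X^L
\end{align*}
in distribution in $\mathbb{D}$, which is the claim.

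The principal obstacle is the Mosco convergence on the random pre-fractals. The $\Gamma$-liminf/recovery inequality requires constructing a sequence $u_n$ on $\Omega^{(\boldsymbol{\xi}|n)}$ with prescribed boundary behaviour and controlled energy approximating a given $u$ on $\Omega^{(\boldsymbol{\xi})}$; the random geometry makes the trace theorems and extension operators depend on $\boldsymbol{\xi}$, and uniformity in $n$ must be recovered by exploiting the scaling $\sigma^{(\boldsymbol{\xi}|n)}$ together with the $d$-set property \eqref{eq:6}. The Dirichlet regime $c_n \to \infty$ is the most delicate, since one must force vanishing trace in the limit without losing the bulk gradient control — this is where careful cut-off constructions near $\partial \Omega^{(\boldsymbol{\xi})}$ are needed.
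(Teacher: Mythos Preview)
The paper itself does not prove this theorem; it is stated with a citation to \cite{CapDovJEE} and no argument is given in the present text. There is therefore no in-paper proof to compare your attempt against.

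That said, your outline is the correct one and matches the strategy of the cited source (and of the companion work \cite{CapDovFCAA}, whose very title advertises ``convergence of forms''): first establish $X^n\Rightarrow X$ via Mosco convergence of the Robin forms $\mathcal{E}^n$ on the pre-fractals to the Neumann/Robin/Dirichlet form on $\Omega^{(\boldsymbol{\xi})}$ according to the regime of $c_n$, and then push this through the composition with the independent continuous time change $L$ by the continuous mapping theorem. Your identification of the $c_n\to\infty$ case as the most delicate is accurate.

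One point you pass over too quickly: the forms $\mathcal{E}^n$ live on different Hilbert spaces $L^2(\Omega^{(\boldsymbol{\xi}|n)})$, while the limit form lives on $L^2(\Omega^{(\boldsymbol{\xi})})$. Classical Mosco convergence is stated on a fixed space, so one must either embed all forms in a common ambient $L^2$ (possible here since the $\Omega^{(\boldsymbol{\xi}|n)}$ are monotone along the construction once the inward/outward choice is fixed) or invoke the Kuwae--Shioya framework for forms on varying spaces. This is handled in \cite{CapDovJEE}, but it is not a detail that can be suppressed in a self-contained argument; the passage from ``strong resolvent convergence'' to ``convergence in distribution in $\mathbb{D}$'' likewise requires the tightness step you mention only in passing.
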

This result says that we have a time-change representation for the solution to the non-local Cauchy problem on  the (fractal) Koch snowflakes. 

The Koch domain (as the random Koch domain) is quite regular in a sense to be better explained. Under Definition \ref{def1}, Koch domains are non-trap for the Brownian motion and therefore we may say that the boundary $\partial \Omega^{(\boldsymbol{\xi})}$ is regular.

\section{Non-local boundary value problems}

\subsection{Singular kernels}

For a bounded subset $\Omega$ of $\mathbb{R}^d$ we first focus on the fractional boundary value problem 
\begin{align}
\label{FBVP}
\left\lbrace
\begin{array}{ll}
\displaystyle \frac{\partial u}{\partial t}(t,x) = \Delta u(t,x), \quad t>0, \; x \in \Omega \\
\\
\displaystyle \eta D^\alpha_t u(t,x) + \sigma \partial_{\bf n} u(t,x) + c\, u(t,x)=0, \quad t>0, \; x \in \partial \Omega\\
\\
\displaystyle u(0,x) = f(x), \quad x \in \overline{\Omega}
\end{array}
\right .
\end{align}
where $\eta, \sigma, c$ are positive constants, $\alpha \in (0,1)$, $\partial_{\bf n}u$ is the outer normal derivative and $D^\alpha_t u(t,x)$ is the Caputo-Dzherbashian fractional derivative. As before, we write $u_f= u_f(t,x)$ for the function such that 
\begin{align*}
u_f \to f \quad \textrm{as} \quad t \downarrow 0.
\end{align*}
Then, we introduce the space 
\begin{align*}
D_L = \left\lbrace \varphi = \varphi_{f|_{\partial \Omega}} :\, \Big| \frac{\partial \varphi}{\partial t}(t,x) \Big| \leq v(x) t^{\alpha-1}, \, v \in L^\infty (\partial \Omega) \right\rbrace.
\end{align*}
We also introduce the processes $\bar{X}=\{\bar{X}_t\}_{t\geq 0}$ and $\hat{X} = \{\hat{X}_t\}_{t\geq 0}$ defined as follows. Let $X=\{X_t\}_{t\geq 0}$ be a Brownian motion on $\overline{\Omega} = \Omega \cup \partial \Omega$ where $\Omega$ is a bounded subset of $\mathbb{R}^d$. Let $L=\{L_t\}_{t\geq 0}$ be an inverse to an  $\alpha$-stable subordinator $H=\{H_t\}_{t\geq 0}$. Define the time-changed process $\hat{X} = \{\hat{X}_t\}_{t\geq 0}$ as the composition $\hat{X}:= X \circ \hat{L}$ with $\hat{L}_t=t$ if $\hat{X}$ is running on $\Omega$ and $\hat{L}_t = L_t$ if $\hat{X}$ is on $\partial \Omega$. Furthermore, we define $\bar{X}$ as the composition of an elastic Brownian motion (Robin boundary condition for the associated Cauchy problem) and the inverse to the process
\begin{align*}
\bar{V}_t = t +  H \circ (\eta/\sigma) \gamma_t
\end{align*}
where $\gamma = \{\gamma_t\}_{t\geq 0}$ is here the local time on $\partial \Omega$ of the reflecting Brownian motion $X^+$ on $\overline{\Omega}$. We recall that an elastic Brownian motion can be written by considering the couple $(X^+, \gamma)$.\\

\begin{figure}
\includegraphics[scale=1]{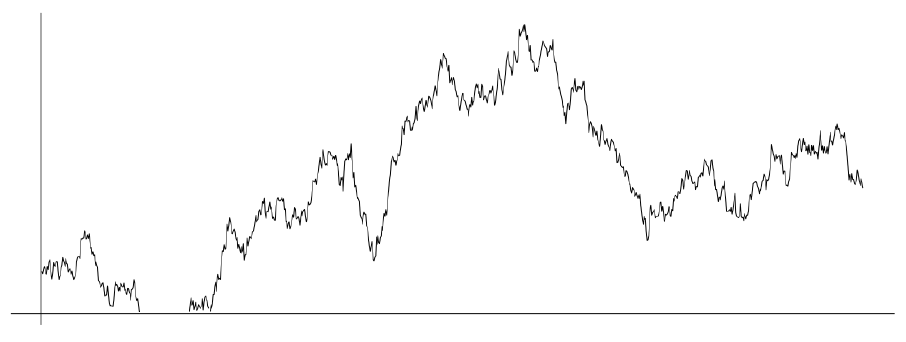} 
\caption{The $1$-dimensional path (a realization of $\hat{X}$ or $\bar{X}$) exhibits a trap effect at zero. The process stops for a random amount of time (according with $\bar{V}^{-1}$), then it reflects continuously. The same effect can be considered on the boundary $\partial \Omega$ for a $d$-dimensional motions with random time-change $\bar{V}^{-1}$.}
\end{figure}

We have the following result. 
\begin{theorem}
\label{thm:MAIN}
(\cite{DovFBVP2}) For the solution $u \in C^{1,2}((0, \infty), \overline{\Omega}) \cap D_L$ to the fractional boundary value problem \eqref{FBVP} we have that:
\begin{itemize}
\item[i)] The solution is given by
\begin{align}
\label{MAINrepu}
u(t,x) = \int_{\overline{\Omega}} f(y) \left( \int_0 p(s, x,y) l(t,s;y) ds\right) m(dy), \quad t >0, \, x \in \overline{\Omega} 
\end{align}
with the probabilistic representation
\begin{align}
\label{MAINrepXhat}
u(t,x) = \mathbf{E}_x[f(\hat{X}_t)], \quad t >0, \, x \in \overline{\Omega};
\end{align}
\item[ii)] Moreover, the following representation holds true
\begin{align}
\label{MINrepXbar}
u(t,x) 
= & \mathbf{E}_x\left[ f(\bar{X}_t) \right]	\notag \\
= & \mathbf{E}_x\left[f(X^+ \circ \bar{V}^{-1}_t) \exp \left( - (c/\sigma)\, \gamma \circ \bar{V}^{-1}_t \right) \right], \quad t>0,\; x \in \overline{\Omega}
\end{align}
where $\bar{V}^{-1}_t$ is the inverse to $\bar{V}_t = t + H \circ (\eta/\sigma) \gamma_t$. The process $\bar{X} = \{\bar{X}_t\}_{t\geq 0}$ can be constructed as a time-changed elastic Brownian motion.
\end{itemize}
\end{theorem}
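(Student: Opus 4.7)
The plan is to reduce the problem to an equivalent resolvent equation via Laplace transform in $t$, identify the resolvent kernel as the Laplace transform of the candidate density appearing in \eqref{MAINrepu}, and then interpret the result probabilistically through the two alternative time-change constructions $\hat{X}$ and $\bar{X}$. Taking $\tilde{u}(\lambda, x) = \int_0^\infty e^{-\lambda t} u(t,x)\, dt$, the heat equation becomes $\lambda \tilde{u} - f = \Delta \tilde{u}$ on $\Omega$, while by \eqref{lapDop} the fractional boundary condition transforms into
\begin{align*}
(\eta \lambda^\alpha + c)\, \tilde{u}(\lambda, x) + \sigma\, \partial_{\bf n} \tilde{u}(\lambda, x) = \eta\, \lambda^{\alpha - 1} f(x), \quad x \in \partial \Omega.
\end{align*}
The growth control implicit in $u \in D_L$ guarantees that $u$ is of exponential order in time, so the Laplace transforms are well defined and Lerch's theorem (Remark \ref{rmk:LTuniqueness}) gives uniqueness of inversion in the relevant class.

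For part (i), I would take as candidate $\tilde{u}$ the Laplace transform of the right-hand side of \eqref{MAINrepu}, with $l(t,s;y)$ specified to equal $\delta(t-s)$ for $y \in \Omega$ (so that the bulk dynamics are unchanged) and the inverse $\alpha$-stable density $l(t,s)$ for $y \in \partial\Omega$. By \eqref{Lapdensityl} this introduces on the boundary trace precisely the factor $\Phi(\lambda)/\lambda = \lambda^{\alpha-1}$ needed to match the right-hand side of the transformed boundary condition above, while preserving the interior resolvent identity. The probabilistic identification $u(t,x) = \mathbf{E}_x[f(\hat{X}_t)]$ is then automatic from the definition of $\hat{X} = X \circ \hat{L}$: when the trajectory is in $\Omega$ the clock $\hat{L}$ is the identity and $\hat{X}$ runs as Brownian motion, while during each boundary excursion the clock switches to the inverse stable subordinator, producing exactly the mixed density of \eqref{MAINrepu}. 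The strong Markov property at hits of $\partial\Omega$ and the independence of $L$ from $X$ glue the excursions together.

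For part (ii), I would identify $\mathbf{E}_x[f(\hat{X}_t)]$ with the elastic Feynman--Kac functional of $\bar{X}$. An elastic Brownian motion admits the multiplicative representation $\mathbf{E}_x[f(X^+_t) \exp(-(c/\sigma)\gamma_t)]$, which accounts for the Robin constant $c$. To reproduce the fractional trap effect on $\partial\Omega$, one subordinates the boundary local time: the continuous additive functional $(\eta/\sigma)\gamma_t$ is supported on $\partial\Omega$, and composing it with the $\alpha$-stable subordinator $H$ and adding ordinary time gives $\bar{V}_t = t + H \circ (\eta/\sigma)\gamma_t$. Its inverse $\bar{V}^{-1}_t$ advances at unit speed in the interior (where $\gamma$ is constant) and freezes during every jump of $H \circ (\eta/\sigma)\gamma$, so that $X^+ \circ \bar{V}^{-1}$ matches $\hat{X}$ in law. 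Inserting the elastic factor then yields \eqref{MINrepXbar}.

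The main obstacle is the verification at the boundary: one must differentiate the candidate $\mathbf{E}_x[f(\hat{X}_t)]$ across $\partial\Omega$ and read off the balance between $\eta D^\alpha_t u$, $\sigma\, \partial_{\bf n} u$ and $c\, u$. This is a Dynkin-type computation that uses the semimartingale decomposition of $X^+$, the duality between $\partial_{\bf n}$ and $\gamma$ via the Revuz correspondence, and the Laplace identity \eqref{lapDop} applied to the boundary trace. The regularity hypothesis $u \in C^{1,2}((0,\infty), \overline{\Omega}) \cap D_L$ is precisely what is needed to interchange differentiation with expectation and to invoke Lerch uniqueness to conclude that the two probabilistic candidates coincide with the unique solution of \eqref{FBVP}.
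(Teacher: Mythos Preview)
The paper does not contain a proof of this theorem: it is stated with the citation \cite{DovFBVP2} and no proof follows, the text simply continuing with the heuristic discussion ``The previous results gives a clear picture about the boundary behavior of the associated process near the boundary.'' This is a survey article, and the theorem is quoted from the companion work \cite{DovFBVP2}; there is therefore nothing in the present paper to compare your argument against.

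That said, your outline is consistent with the machinery the paper sets up (the Laplace identities \eqref{lapDop} and \eqref{Lapdensityl}, Lerch uniqueness from Remark~\ref{rmk:LTuniqueness}, the Revuz correspondence for boundary local time), and the overall strategy --- pass to the resolvent, match the boundary symbol $\eta\lambda^\alpha + c$ against the Robin data plus the factor $\Phi(\lambda)/\lambda$ coming from the inverse stable clock on the boundary, then identify the two time-change constructions $\hat X$ and $\bar X$ in law --- is the natural one and is the approach pursued in \cite{DovFBVP1,DovFBVP2}. What remains genuinely nontrivial, and what your sketch acknowledges but does not carry out, is the Dynkin-type boundary computation tying $\sigma\,\partial_{\bf n}u$ to the local time $\gamma$ and showing that the two processes $\hat X$ and $X^+\circ\bar V^{-1}$ have the same law; this is the substantive content of the cited paper and cannot be completed with the tools displayed here.
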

The previous results gives a clear picture about the boundary behavior of the associated process near the boundary. In particular, by considering the representation in terms of $\bar{X}$, since the process $H$ may have jumps, then the inverse to $\bar{V}_t$ slows down the process on the boundary according with the plateaus associated with that jumps. As $\alpha =1$, the process $\bar{V}_t$ becomes $V_t = t + (\eta/\sigma) \gamma_t$ and $\bar{X}$ corresponds to the elastic sticky Brownian motion. \\

\emph{The fractional boundary value problem above introduces independent waiting times on the boundary $\partial \Omega$ in terms of $\bar{V}$. This means that a given stop can be explained by an endogenous effect. We exploit such a reading in order to provide a connection with trap boundaries.} \\

For example the process $\bar{X}$ on a (non-trap) Koch domain can be consdered in order to describe the motion of a Brownian motion $B^+$ on a (trap) Koch domain obtained as in Figure \ref{Fig:KochModified}.\\

In the papers \cite{DovFBVP1, DovFBVP2} the lifetime of the process $\bar{X}$ has been investigated together with an interesting connection with irregular domains. In particular, the time the process spends on the boundary can be related with a delayed reflection or an irregular boundary as well. The non-local boundary value problem (for a smooth domain) can be therefore associated with a local boundary value problem (for an irregular domain). \\

\begin{figure}
\includegraphics[scale=.3]{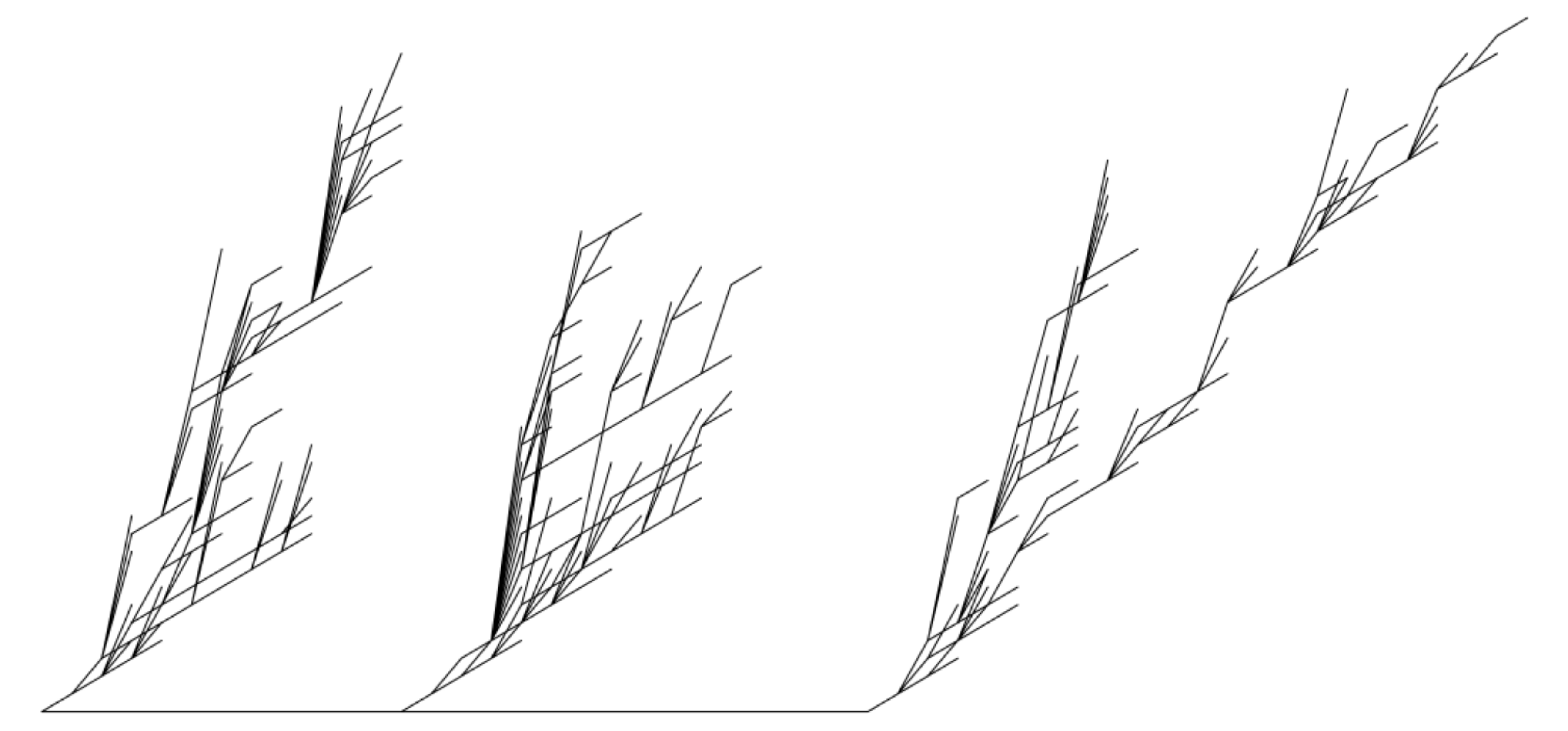} 
\caption{Graphs in a forest in which the distances are randomly determined by $\bar{V}^{-1}$. Each graph represents an excursion of $\bar{X}$ in the sense of Lukasiewicz path with a rotation for the edges of each vertex in order to obtain the (more attractive) Lichtenberg figures. The waiting time at zero gives the random distance between Lightning graphs.}
\end{figure}

A further result concerning non-local boundary conditions has been investigated in \cite{ColDov2}. We provide here only the main idea. Let $D_0 = D_1 \cup D_2$
be the set given by considering the sets
\begin{align*}
D_1 = \left\lbrace \varphi : \forall\, x>0,\; \varphi(\cdot, x) \in W^{1, \infty}(0, \infty) \;\; \textrm{and} \;\; \forall\, t>0,\;  \lim_{x \downarrow 0} \mathfrak{D}^\Psi_t \varphi(t,x) \; \textrm{exists} \right\rbrace
\end{align*}
and
\begin{align*}
D_2 = \left\lbrace \varphi : \forall\, t >0,\; \varphi(t, \cdot) \in W^{1,\infty}(0, \infty) \;\; \textrm{and} \;\; \forall\, t>0,\; \lim_{x\downarrow 0} \mathbf{D}^\Phi_{x+} \varphi(t,x) \; \textrm{exists} \right\rbrace .
\end{align*}
Then, we state the following result concerning the problem
\begin{equation}
\label{ProblemColDov}
\left\lbrace
\begin{array}{ll}
\displaystyle \frac{\partial u}{\partial t}(t,x) = \frac{\partial^2 u}{\partial x^2} (t,x), & t>0, \; x \in (0, \infty)\\
\\
\displaystyle \eta \mathfrak{D}^\Psi_t u(t,0) + \sigma \mathbf{D}^\Phi_{x+} u(t,0)=0, & t>0\\
\\
\displaystyle u(0, x) = f(x), & x \in [0, \infty)
\end{array}
\right .
\end{equation}
for an initial datum $f \in C[0, \infty) \cap L^\infty(0,\infty)$ and positive constants $\eta, \sigma$.

\begin{theorem}
(\cite{ColDov2}) For the solution $u \in C^{1,2}((0, \infty), (0,\infty)) \cap D_0$ to the problem \eqref{ProblemColDov} we have the probabilistic representation
\begin{align*}
u(t,x) = \mathbf{E}_x[ X^\bullet_t] = \mathbf{E}_x[f(X^+ \circ \bar{V}^{-1}_t + R_{\gamma} \circ \bar{V}^{-1}_t)], \quad t>0,\; x \in [0, \infty)
\end{align*}
where 
\begin{itemize}
\item[-] the couple $(X^+, \gamma)$ has been defined above;
\item[-] $\bar{V}^{-1}_t$ is an inverse as defined in the previous theorem with $H=H^\Psi$;
\item[-] $R_t= H \circ L_t - t$ is the composition of a subordinator and its inverse with $H=H^\Phi$.
\end{itemize}
\end{theorem}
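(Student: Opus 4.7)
The plan is to verify the three defining properties of $u$: the heat equation on $(0,\infty)\times(0,\infty)$, the initial condition $u(0,x)=f(x)$, and the non-local boundary relation at $x=0$. The principal tool is the time-Laplace transform, which via \eqref{lapDop} turns $\mathfrak{D}^\Psi_t$ into multiplication by $\Psi(\lambda)$ (up to an initial-value term), combined with Lerch's theorem (Remark \ref{rmk:LTuniqueness}) to return to the $t$-variable. Throughout one uses that $H^\Psi$ and $H^\Phi$ are independent of each other and of $(X^+,\gamma)$, and that $\gamma$ is carried by $\{s : X^+_s = 0\}$.

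I would first settle the interior equation and the initial condition. For $x>0$, until the first hitting time of $0$ by $X^+$ one has $\gamma \equiv 0$, hence $\bar{V}^{-1}_t = t$ and $R_{\gamma_t}=R_0=0$, so $\bar{X}_t = X^+_t$. Applying the strong Markov property at that hitting time (alternatively, testing against $C^{1,2}_c((0,\infty)\times(0,\infty))$ functions via It\^o), $u$ solves $\partial_t u = \partial_{xx} u$ on $(0,\infty)\times(0,\infty)$. The initial condition is immediate from $\bar{V}^{-1}_0 = 0$, $\gamma_0 = 0$, $R_0 = 0$, and $X^+_0=x$.

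The heart of the argument is the non-local boundary condition. Let $\widetilde{u}(\lambda,x) = \int_0^\infty e^{-\lambda t} u(t,x)\, dt$. Performing the change of variable $s = \bar{V}^{-1}_t$, so that $dt = ds + dH^\Psi_{(\eta/\sigma)\gamma_s}$, and conditioning on $(X^+,\gamma,H^\Phi)$ using $\mathbf{E}_0[\exp(-\lambda H^\Psi_u)] = \exp(-u\,\Psi(\lambda))$, one obtains
\[
\widetilde{u}(\lambda,x) = \mathbf{E}_x\!\left[\int_0^\infty e^{-\lambda s - (\eta/\sigma)\Psi(\lambda)\gamma_s}\, f\!\left(X^+_s + R_{\gamma_s}\right)\,\bigl(ds + d H^\Psi_{(\eta/\sigma)\gamma_s}\bigr)\right].
\]
The additive spatial shift $R_{\gamma_s}$ is the overshoot process generated by $H^\Phi$ evaluated along local time, and its spatial Laplace symbol is precisely the one associated with the Marchaud operator $\mathbf{D}^\Phi_{x+}$ via the Phillips representation \eqref{PhilRep} of Section \ref{spaceNLop}. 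Evaluating the trace at $x=0$ and combining with the Laplace identity \eqref{lapDop} for $\mathfrak{D}^\Psi_t$, the relation
\[
\eta\bigl[\Psi(\lambda)\widetilde{u}(\lambda,0) - (\Psi(\lambda)/\lambda)\, f(0)\bigr] + \sigma\, \mathbf{D}^\Phi_{x+}\widetilde{u}(\lambda,0) = 0
\]
reduces to an algebraic identity in $\Psi(\lambda)$ and the symbol $\Phi$, both of which are built into $\bar{V}$ and $R$ by construction; the coefficient ratio $\eta:\sigma$ appears exactly through the factor $(\eta/\sigma)$ in $\bar{V}_t$. Lerch's theorem (Remark \ref{rmk:LTuniqueness}) then promotes this to the pointwise boundary relation for each $t>0$.

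The main obstacle will be to verify that the trace limits in the non-local operators are well-defined, i.e.\ that $u \in C^{1,2}((0,\infty),(0,\infty)) \cap D_0$. Concretely, one must show enough regularity of $u(t,\cdot)$ for $t>0$ and of $u(\cdot,x)$ for $x>0$ so that both limits defining $D_1$ and $D_2$ exist as $x\downarrow 0$. This combines the smoothness of the densities $h$ and $l$ proved in Section \ref{sec:timeNLop} under assumptions of type \eqref{assumel} with Tauberian estimates and Young's inequality \eqref{YoungSymb}; the interchange of the limit $x\downarrow 0$ with the Marchaud and Caputo--Dzherbashian operators is the most delicate step.
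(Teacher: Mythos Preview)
The paper does not prove this theorem: it is stated with attribution to \cite{ColDov2} and no argument is given here, so there is no in-paper proof to compare against. Your outline is a reasonable sketch of the Laplace-transform strategy one would expect in \cite{ColDov2}, and the decomposition into interior equation, initial datum, and boundary identity is the natural one.

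That said, two points in your sketch are genuinely incomplete rather than routine. First, the change of variables $s=\bar V^{-1}_t$ with $dt = ds + dH^\Psi_{(\eta/\sigma)\gamma_s}$ needs care: $\bar V$ is strictly increasing but $H^\Psi$ is a pure-jump process, so the substitution must be justified through the occupation/excursion decomposition of $(X^+,\gamma)$ rather than as a smooth change of variable, and the contribution of the $dH^\Psi$-term to the Laplace integral must be computed, not just carried along symbolically. Second, the step where you say the overshoot $R_{\gamma_s}$ ``has spatial Laplace symbol precisely the one associated with $\mathbf D^\Phi_{x+}$'' is the crux of the boundary identity and is asserted rather than shown: one needs the renewal/Dynkin formula for the overshoot of $H^\Phi$ to see that shifting by $R$ produces exactly the kernel $\phi(dy)$ in the Marchaud representation, and then to pass this through the trace at $x=0$. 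You correctly flag the $D_0$-regularity as the most delicate step; without it the boundary operators are not even defined, so this cannot be left as an afterthought.
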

A detailed discussion on the paths behavior of that process is given in \cite{ColDov2}, the process near the boundary jumps inside the domain because of $R_t$ and then it stops there according with $\bar{V}^{-1}_t$. Here we recall that a subordinator can be obtained from a compound Poisson process (see \cite{DovCRW}) written as
\begin{align*}
\frac{1}{n} \sum_{k=1}^{N_{nt}} \epsilon_k e^{\chi_k}, \quad t>0,\quad n \in \mathbb{N}
\end{align*} 
where $N_t$ is a Poisson process, $\chi_k$ are exponential random variables such that $\mathbf{P}(\chi_k > x) = e^{-\alpha x}$ and $\mathbf{P}(\epsilon_k = 1)=1$. The case in which $\mathbf{P}(\epsilon_k = +1)=p$ and $\mathbf{P}(\epsilon_k = -1)=q$ leads to the limit $H_{pt} - H_{qt}$ as $n \to \infty$. Concerning our case, we can approximate the last jump of $H$ given by $R$ by considering the jump $Y_k=\epsilon_k e^{\chi_k}$ with power law $P(Y_k \in dy) = \alpha n^{-\alpha} y^{-\alpha-1} \mathbf{1}_{(y > 1/n)} dy$. The jump never occur if its length is less than a given threshold.\\

In case $\mathbf{D}^\Phi_{x+}$ is written in terms of a non-singular kernel, the jumps can be well-described as in the next section, the process $R_t$ can be associated with a compound Poisson process. This boundary behavior with jumps has been first investigated in \cite{IM63, IM74} with no mention about non-local operators which have been considered in \cite{ColDov2}. The novelty concerned with random holding time at the boundary has been first treated in \cite{DovFBVP1, DovFBVP2} by extending the sticky boundary condition introduced by Feller in \cite{Feller52} and Wentzell in \cite{Wentzell59} with the probabilistic representation in terms of sample paths obtained by It\^{o} and McKean \cite[Section 10]{IM74}.

\begin{figure}
\includegraphics[scale=.9]{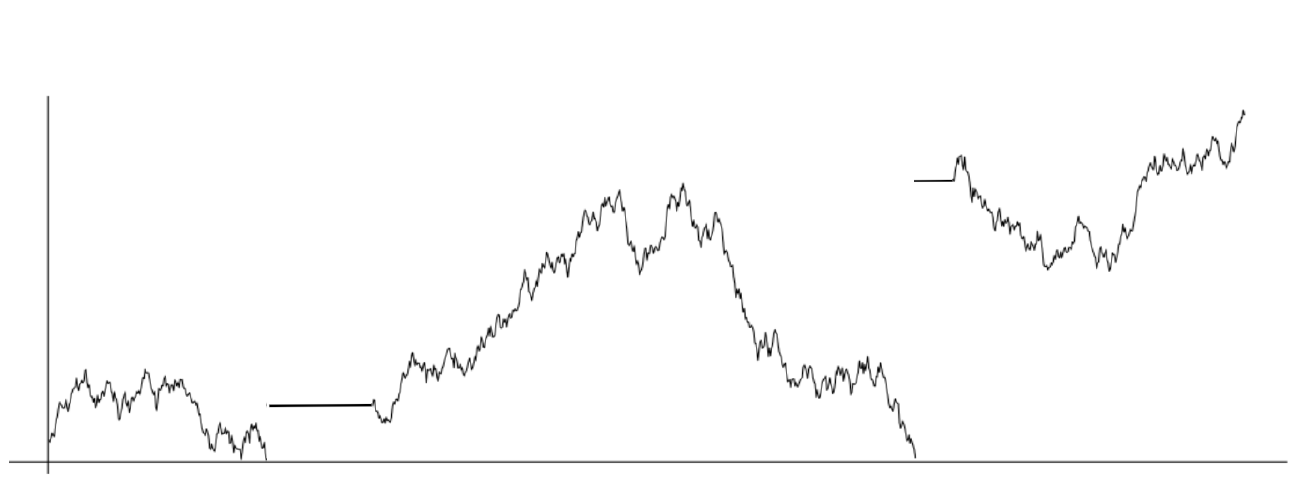} 
\caption{The path of $X^+ \circ \bar{V}^{-1}_t + R_{\gamma} \circ \bar{V}^{-1}_t$. The process exhibits a {\it jump and stop behavior} near the boundary. Indeed, the process never hits the zero point boundary, it jumps near the boundary according with $R_\gamma$ and then it stops for a random amount of time according with $\bar{V}^{-1}$.}
\end{figure}

The connection between local and non-local boundary value problem seems to be quite intriguing. Consider the problems
\begin{equation}
\left\lbrace
\begin{array}{ll}
\displaystyle \frac{\partial u}{\partial t}(t,x) = \mu \frac{\partial u}{\partial x}(t,x) + \frac{\partial^2 u}{\partial x^2}(t,x), & t>0,\; x\in (0, \infty) \\
\\
\displaystyle \frac{\partial u}{\partial x}(t,0) =c_0 u(t,0), & t>0\\
\\
\displaystyle u(0, x) = f(x), & x \in [0, \infty)
\end{array}
\right .
\end{equation}
and
\begin{equation}
\left\lbrace
\begin{array}{ll}
\displaystyle \frac{\partial u}{\partial t}(t,x) = \mu \frac{\partial u}{\partial x}(t,x) + \frac{\partial^2 u}{\partial x^2}(t,x), & t>0,\; x\in (0, \infty) \\
\\
\displaystyle \mathfrak{D}^{\Phi}_t u(t,0) + c_1 u(t,0) = c_2, & t>0\\
\\
\displaystyle u(0, x) = f(x), & x \in [0, \infty)
\end{array}
\right .
\end{equation}
where $\Phi(\lambda) = \sqrt{\lambda + (\mu/2)^2} - \sqrt{(\mu/2)^2}$ introduces the so-called tempered derivative (of Caputo-Dzherbashian type). For the sake of simplicity, here we consider $c_0$, $c_1$, $c_2$ as given constants. In \cite{DovIaf1}, starting from a previous result in \cite{DovIafOrs1}, it has been shown that the solution coincides in case $f$ is constant. Thus, for the elastic drifted Brownian motion, say $X^{\mu, c_0}_t$, we can write
\begin{align*}
u(t,x) = \mathbf{E}_x[f(X^{\mu, c_0}_t) M^{\mu, c_0}_t]
\end{align*} 
where the multiplicative functional associated with the elastic condition can be written in terms of $L$, the inverse to a subordinator $H$ with symbol $\Phi$. In particular, $M^{\mu, c_0}$ is equivalent to $\bar{M}^{c_1, c_2} = \bar{M}^{c_1, c_2}(L)$ which is associated with the non-local (tempered) boundary condition.\\

In the literature some works deal with non-local boundary value problems with different meaning (see for example \cite{AsBe, ElS, NtTs}). Our approach is motivated by the clear effort to find a description for anomalous behavior near smooth boundaries with possible application in case of irregular boundaries.

\subsection{Non-singular kernels}
We focus on a famous operator which has been deeply discussed in the last few years. Recently, in a series of papers starting from \cite{CapFab}, some authors considered the Caputo-Fabrizio operator (for a given constant $a$)
\begin{align*}
\mathscr{D}^{(\alpha)}_x u(x) = \frac{ M(\alpha)}{1-\alpha} \int_{a}^x u^\prime(\tau) \exp\left[ - \frac{\alpha\, (x- \tau)}{1-\alpha} \right]d\tau
\end{align*}
where $M(\alpha)$ is a normalizing function such that $M(0)=M(1)=1$ and $\alpha \in (0,1)$ is the fractional order of the operator. The operator has been introduced as a fractional derivative although the proper definition of fractional operators is now stimulating a deep debate (\cite{OrtMac}). However, the operator $\mathscr{D}^{(\alpha)}_x$ can be regarded as a non-singular operator and can be included in the class of integro-differential operators. As pointed out in \cite{CapFab}, we have that $\mathscr{D}^{(\alpha)}_x u(x) \to u^\prime(x)$ as $\alpha \to 1$ and $\mathscr{D}^{(\alpha)}_t u(x) \to u(x) - u(a)$ as $\alpha \to 0$. For the sake of simplicity we assume here that $a=0$ and $u(0)=0$. Our results are concerned with the boundary value problem in which the boundary-operator
\begin{align}
\mathscr{D}^{(\alpha)}_x u  + c \alpha\, u=0, \qquad c\geq 0
\label{BO-1}
\end{align}
is considered. As a consequence of the properties illustrated before for the Caputo-Fabrizio operator, as $\alpha\to 0$ or $\alpha\to 1$ we respectively have 
\begin{align}
u(x)=0 \qquad \textrm{or} \qquad u^\prime(x) = - c\,u(x)
\label{cond-1}
\end{align}
in place of \eqref{BO-1}. The conditions \eqref{cond-1} respectively correspond to Dirichlet or Robin condition. For $\alpha \in [0,1]$ and $c \geq 0$, we consider the solution to
\begin{align*}
\frac{\partial u}{\partial t} = G u, \quad u_0 = f
\end{align*}
with
\begin{align*}
(\mathscr{D}^{(\alpha)}_x u + c \alpha \, u ) |_{x=0} = 0
\end{align*}
where $G$ is the generator of a Markov process on $[0, \infty)$.

In \cite{CapFab}, the authors provided some properties of $\mathscr{D}^{(\alpha)}_x u(x)$. For $a=0$ for instance, the Laplace transform reads as follows (as usual we denote by $\widetilde{u}$ the Laplace transform of $u$)
\begin{align}
\int_0^\infty e^{-\lambda x} \mathscr{D}^{(\alpha)}_x u(x)\, dx = & \frac{M(\alpha)}{1-\alpha} \left( \lambda \widetilde{u}(\lambda) - u(0) \right) \int_0^\infty e^{-\lambda x} e^{- \frac{\alpha\, x}{1-\alpha}} dx \notag \\
= & M(\alpha) \frac{1}{\alpha + \lambda (1-\alpha)} \left( \lambda \widetilde{u}(\lambda) - u(0) \right) \label{CFop-1}.
\end{align}
From now on, we assume that 
\begin{align}
M(\alpha)=1 \quad \textrm{and} \quad a=0.
\end{align}
The Laplace transform \eqref{CFop-1} corresponds to the Laplace transform of the fractional operator $\mathfrak{D}^\Phi_x$ with L\`evy symbol and measure
\begin{align}
\Phi(\lambda) = (\theta + 1) \frac{\lambda }{\theta + \lambda} \quad \textrm{and} \quad \bar{\phi}((y, \infty)) = (\theta +1) e^{- \theta y}, \quad \theta = \frac{\alpha}{1-\alpha}
\label{Phi-CF}
\end{align}
where, as a quick check shows, 
\begin{align*}
\lim_{\lambda \downarrow 0} \frac{\Phi(\lambda)}{\lambda} = \frac{1}{\alpha}. 
\end{align*}
From a probability view-point, the symbol $\Phi$ characterizes a subordinator $H$, that is a non negative, non-decreasing process. Notice that, $\bar{\phi}((0, \infty)) < \infty$ if $\alpha < 1$ and the process $H_t$ is a step-process, its trajectories are not strictly increasing. The inverse to a subordinator can be defined as 
\begin{align*}
L_t = \inf\{s\geq 0\,:\, H_s >t\}, \quad t>0
\end{align*}
from which we also obtain the relation
\begin{align}
\mathbf{P}_0(H_t < x) = \mathbf{P}_0(L_x > t), \quad x,t>0.
\label{rel-P-LH}
\end{align}
As $\alpha\to 1$ we obtain $\Phi(\lambda)= \lambda$ as expected, the process $H$ corresponds to the elementary subordinator $H_t=t$. We recall that $L_t$ may have plateaus only if the process $H_t$ may have jumps (at least for $\alpha\neq 1$). Since $H_t$ has non-decreasing paths, the inverse $L_t$ may have jumps ($L$ is not continuous).  Thus, $H$ is not strictly increasing and $L_t$ can not be continuous. As $\alpha\to 0$, the ratio $\Phi(\lambda)/\lambda$ gives an infinite mean life time and therefore a delayed process on the boundary. This actually corresponds to an absorption at $x=0$ whereas the Dirichlet condition suggests a kill. We still have (\cite{CapDovFCAA})
\begin{align}
\mathbf{E}_0 \left[ \int_0^\infty e^{-\lambda t} f(L_t) dt  \right] = \frac{\Phi(\lambda)}{\lambda} \mathbf{E}_0 \left[ \int_0^\infty e^{-\lambda H_t} f(t) dt \right].
\end{align}
Thus, for $f=\mathbf{1}_D$ with $D \subseteq (0, \infty)$,
\begin{align}
\int_0^\infty e^{-\lambda t} \mathbf{P}_0(L_t \in D) dt = \frac{\Phi(\lambda)}{\lambda} \int_D e^{- t \Phi(\lambda)} dt.
\end{align}
We also write
\begin{align*}
\int_0^\infty e^{-\lambda t} \mathbf{P}_0(L_t \in dx) dt = \frac{\Phi(\lambda)}{\lambda} e^{-x \Phi(\lambda)} dx 
\end{align*}
and the density $l$ of $L$ solves 
\begin{align*}
\mathscr{D}^{(\alpha)}_t l = - \partial_x l
\end{align*}
where $L$ is an inverse of $H$ with symbol $\Phi(\lambda) = (\theta +1) \frac{\lambda}{\theta + \lambda}$. It can be easily shown that $H$ is a compound Poisson process. Indeed, consider the process 
\begin{align*}
H_t=\sum_{k=1}^{N_t} Y_k
\end{align*}
where $N_t\sim Pois((\theta+1)t)$ and $Y_k$ are i.i.d. random variables with $Y_1\sim Exp(\theta)$, $\theta>0$. For the reader's convenience, we recall that, for $N_t \perp Y_1$,
\begin{align*}
\mathbf{E}[e^{-\lambda H_t}] 
= & \mathbf{E}\left[ \prod_{k=1}^{N_t} e^{-\lambda Y_1}  \right] = \mathbf{E} \left[ \left( \mathbf{E}[e^{-\lambda Y_1}] \right)^{N_t} \right]\\ 
= & \exp \left( - (\theta + 1) t \left(1 - \mathbf{E}[e^{-\lambda Y_1}] \right) \right).
\end{align*}
Thus, it turns out that
\begin{align*}
\mathbf{E}[e^{-\lambda H_t}] = \exp \left(-t (\theta+1) \int_0^\infty (1-e^{-\lambda y}) \theta e^{-\theta y} dy \right), \quad \lambda>0.
\end{align*}
Our focus is on the boundary condition
\begin{align*}
\mathscr{D}^{(\alpha)}_x u(t,0) + c\alpha u(t,0) = 0
\end{align*}
corresponding to the drifted process $H_t + c\alpha t =: \bar{H}_t$. The process $\bar{H}_t$ may have jumps, never plateaus except for $c=0$. This process is therefore strictly increasing with continuous inverse $\bar{H}_t^{-1}$ for which (see also \cite{BegDov1})
\begin{align*}
\mathbf{P}(\bar{H}^{-1}_t > s) = \mathbf{P}(t < \bar{H}_s) = \mathbf{P}(t-c\alpha s < H_s) 
\end{align*}
and
\begin{align*}
\int_0^\infty e^{-\lambda t}  \mathbf{P}(\bar{H}^{-1}_t > s) dt = \frac{1}{\lambda} e^{-s (c\alpha \lambda + (\theta+1) \frac{\lambda}{\theta +\lambda})} , \quad \lambda>0.
\end{align*}
By considering the symbol $\Phi(\lambda)= c\alpha \lambda + \frac{\theta+1}{\theta+\lambda} \lambda$ we have the (space) non-local boundary condition discussed in the previous section. The interested reader can also consult \cite{BegCap} for a discussion on this operator.

Many other cases can be considered. Indeed, the operator
\begin{align}
\label{straightforwardC} 
\mathscr{D}^{\Phi}_x u(x) := \int_0^x u^\prime \mathscr{K}(x-\tau) d\tau
\end{align} 
with the non-singular kernel $\mathscr{K}$ can be associated with a compound Poisson process as above. Let us consider $\mathbf{P}(Y_k \in dy) = f_{Y_1}(y)dy$ for the jumps from the boundary where $Y_k$ takes values in some set $supp(Y_k)=supp(f_{Y_1}) \subset (0,\infty)$. We do not consider negative jumps. Then, we get
\begin{align*}
\mathbf{E}[e^{-\lambda H_t}] = \exp \left(- t (\theta + 1) \int_0^\infty \big( 1 - e^{-\lambda y} \big) f_{Y_1}(y)dy \right), \quad \lambda>0
\end{align*}
and, for $u$ such that $u(x)=0$ as $x\leq 0$, 
\begin{align*}
\mathscr{D}^{\Phi}_x u(x) 
= & (\theta + 1) \int_0^\infty \big( u(x) - u(x-y) \big) f_{Y_1}(y)dy\\
= & - (\theta + 1) \int_0^\infty f_{Y_1}(y) \int_0^y \frac{du}{dz} (x-z) dz dy\\
= & - (\theta +1) \int_0^\infty \frac{du}{dz} (x-z) \int_z^\infty f_{Y_1}(y) dy\, dz\\
= & (\theta +1) \int_0^x u^\prime(x-z) \, \mathbf{P}(Y_1 > z) \, dz.
\end{align*}
Thus, straightforward calculations lead to \eqref{straightforwardC} with non-singular kernel
\begin{align*}
\mathscr{K}(z) = (\theta +1) \, \mathbf{P}(Y_1 > z), \quad \theta >-1.
\end{align*}
A quick example is given by $Y_1$ distributed as a Mittag-Leffler random variable for which $\mathbf{P}(Y_1 >y) = E_\alpha (-r y^\alpha)$ with $r>0$ and $\alpha\in (0,1]$. It is well-known that for $\alpha=1$ we have $Y_1 \sim Exp(r)$ obtaining the case discussed above.

\vspace{1cm}

{\bf Acknowledgements.} 
The author thanks his institution Sapienza University of Rome and the group INdAM-GNAMPA for the support under their Grants. The authors also thank the anonymous reviewers for their careful reading of the manuscript and their suggestions.

\end{document}